\numberwithin{equation}{section}
\theoremstyle{plain}
\newtheorem{thm}{Theorem}[section]
\newtheorem{proposition}{Proposition}[section]
\newtheorem{lem}{Lemma}[section]
\begin{document}

\renewcommand{\baselinestretch}{1.2}

\markboth{\hfill{\footnotesize\rm YING DING AND BIN NAN} \hfill}
{\hfill {\footnotesize\rm Mean Survival Time Estimation In Censored Linear Model} \hfill}

\renewcommand{\thefootnote}{}
$\ $\par


\fontsize{10.95}{14pt plus.8pt minus .6pt}\selectfont
\vspace{0.8pc}
\leftline{\large\bf ESTIMATING MEAN SURVIVAL TIME: WHEN IS IT POSSIBLE?}

\vspace{.4cm}
\centerline{YING DING}
\centerline{\it University of Pittsburgh}
\vspace{.4cm}
\centerline{BIN NAN}
\centerline{\it University of Michigan}
\vspace{.55cm}
\fontsize{9}{11.5pt plus.8pt minus .6pt}\selectfont


\noindent{\bf ABSTRACT. For right censored survival data, it is well known that the mean survival time can be consistently estimated when the support of the censoring time contains the support of the survival time. In practice, however, this condition can be easily violated because the follow-up of a study is usually within a finite window. In this article we show that the mean survival time is still estimable from a linear model when the support of some covariate(s) with nonzero coefficient(s) is unbounded regardless of the length of follow-up. This implies that the mean survival time can be well estimated when the covariate range is wide in practice. The theoretical finding is further verified for finite samples by simulation studies. Simulations also show that, when both models are correctly specified, the linear model yields reasonable mean square prediction errors and outperforms the Cox model, particularly with heavy censoring and short follow-up time.}

\vspace{9pt}
\noindent {\it Key words:}
Censored linear regression, empirical process theory, Gehan weights, mean survival time, unbounded covariate support


\fontsize{10.95}{14pt plus.8pt minus .6pt}\selectfont

\section{Introduction} \label{sec:intro1}

Estimating mean survival time becomes increasingly important, especially in oncology studies. For example, \citet{Zhao2011} proposed to use patient's mean survival time as the clinical outcome for the evaluation of optimal personalized treatment in lung cancer clinical trials. In some circumstances, the restricted mean survival time has been used to either compare the treatment effects or predict the individual patient's survival outcomes. However, it depends on the choice for the upper limit time $T^*$ and it is often difficult to explain the restriction part to clinicians or patients, whereas the unrestricted mean survival time provides an intuitive and straightforward interpretation.

The linear regression model for censored survival data, as an important alternative to the Cox model \citep{Cox1972}, has been extensively studied in recent years, see e.g. \citet{Ritov, Tsiatis, Wei1990, Ying1993, Jin2003, DingNan2011}, among many others. This type of model appeals in many ways \citep{Cox1984}: it models the failure time directly and thus has a more intuitive interpretation; it may provide more accurate summary of the data when Cox's proportional hazards assumption is violated; and more importantly, it can be used to predict the failure time in a straightforward way.

Obviously, a good mean survival time estimation/prediction requires a good estimator for the intercept parameter in a linear model. The study of such a linear model has primarily focused on the slope parameter estimation. Commonly used estimating methods for slopes include: the Buckley-James method \citep{BJ1979} that imputes the censored failure time using mean residual given covariates;  and the rank-based method \citep[among many others]{Prentice,Tsiatis,Ying1993} that is derived by using linear rank tests for the right censored data. \citet{Ritov} showed that the class of weighted rank-based estimating functions of \citet{Tsiatis} is asymptotically equivalent to the class of Buckley-James estimating functions on transformed residuals.

The estimation of intercept parameter when the error distribution is unspecified, however, has not been thoroughly studied mostly due to the finite follow-up time in practice so that the intercept is usually believed to be underestimated. \citet{BJ1979} claimed that in general the intercept can not be consistently estimated. In some of their simulations, however, \citet{Heller} and \citet{Schneider} found that the intercept can sometimes be estimated quite well. Without the presence of covariates, using an integration by parts argument with a truncation technique, \citet{Sursarla1980} showed that when the support of censoring time distribution contains the support of failure time distribution together with appropriate assumptions for the tail probability, the mean failure time estimation based on a Kaplan-Meier type estimator is consistent almost surely under random censoring. Using the reverse martingale approach, \citet{StuteWang} established more general strong consistency results including the mean failure time estimation.
Based on the work of \citet{Sursarla1984} and \citet{Sursarla1980}, \citet{WangNan} conjectured that the intercept can be consistently estimated when the supports of some covariates are not restricted to finite intervals. In this article, we confirm such a conjecture by formally establishing the consistency result for the intercept estimator. This result makes the estimation of mean survival time possible under a linear regression model when covariate support is wide in a practical setting.

In a linear model, the intercept estimation is equivalent to the mean failure time estimation on the residual scale if true values of the slope parameters are given. In reality, however, the slope parameters need to be estimated, which dramatically complicates the study of asymptotic properties of the intercept estimation. For the consistency of intercept estimation when slopes are estimated, we are only aware of \citet{LaiYing1991} who assumed bounded covariates, bounded support of the failure time distribution and wider support of the censoring time distribution. The latter assumption, however, is often violated in practice due to the nature of limited follow-up time in, for example, most of the human disease studies. Instead of assuming wider support of the censoring time distribution, we consider the setting that the supports of some covariates with nonzero coefficients are not restricted to finite intervals, which requires additional consideration on the slope estimation because its theoretical developments to date have been primarily under the assumption of bounded covariates. The unbounded covariate support is a technical condition, and approximates the practical situation where the ranges of the explanatory covariates are wide.

This article is organized as follows. In Section \ref{sec:IntEst} we present the strong consistency property of an intercept estimator under the assumption of unbounded covariates. In Section \ref{sec:SlpEst} we present both in probability and almost sure consistency as well as asymptotic normality results for the Gehan-weighted rank based slope estimators without assuming bounded covariates. In Section \ref{sec:Simu} we conduct simulation studies by varying the covariate support and the truncation time under different error distributions with different sample sizes. We also compare the failure time prediction performance with the Cox model under the standard extreme value error distribution for which both models fit the data correctly.
In Section \ref{sec:Exp} we provide an application to the Mayo primary biliary cirrhosis (PBC) study for illustration.
We provide some concluding remarks in Section \ref{sec:Con}. Proofs of the technical results rely on modern empirical process theory and are deferred to Section \ref{sec:Proofs}.

\section{Intercept estimation} \label{sec:IntEst}


Consider the linear regression model:
\begin{equation} \label{eq:aft}
T_i = \alpha_0 + X_i^{'}\beta_0 + \zeta_i, \quad i=1,\dots,n,
\end{equation}
where $\zeta_i$, $i=1,\dots,n$, are independent and identically distributed (i.i.d.) with zero mean. The response variable $T_i$ for the $i$th subject is the failure time (possibly transformed by a known monotone function). When $T_i$ is subject to right censoring, we only observe $(Y_i,\Delta_i,X_i)$, where $Y_i=\min(T_i,C_i)$, $C_i$ is the censoring time (possibly transformed by the same function that yields $T_i$), and $\Delta_i = 1(T_i\leq C_i)$. Here we assume that $(X_i, C_i)$, $i=1,\dots,n$, are i.i.d. and independent of $\zeta_i$.

Throughout the sequel we consider one-dimensional $\beta_0$ just for notational simplicity. All the results in this article hold for multiple-dimensional $\beta_0$.
Denote the parameter space for $\beta$ by ${\cal B}$, an arbitrary subset of the real line. For any $\beta \in \mathcal{B}$ we denote
\begin{equation*}
e_{\beta,i} = T_i - \beta X_i, \quad e_{0,i} = T_i-\beta_0 X_i=\alpha_0+\zeta_i,
\end{equation*}
and
\begin{equation*}
\epsilon_{\beta,i} = Y_i - \beta X_i, \quad \epsilon_{0,i}=Y_i-\beta_0 X_i.
\end{equation*}
Here, $e_{\beta,i}$ are the failure times in the residual scale with $\beta_0$ replaced by $\beta$, $\epsilon_{\beta,i}$ are the corresponding observed times in the residual scale for a fixed $\beta$, and $e_{0,i}$ are the error terms that have absorbed the intercept in model (\ref{eq:aft}). We use $F$ and $G$ to denote the distribution functions of $e_{0,i}$ and $C_i$, and $f$ and $g$ to denote their density functions, respectively. Now we adopt the empirical process notation of \citet{vanWellner1996}. In particular, for a function $f$ of a random variable $U$ that follows distribution $P$, we denote
\begin{eqnarray*}
P f = \int  f(u)\ d P(u), \quad
\mathbb{P}_nf = n^{-1}\sum_{i=1}^n f(U_i), \quad
\mathbb{G}_nf = n^{1/2}(\mathbb{P}_n-P)f,
\end{eqnarray*}
and refer all the details to the reference. Set $\epsilon_\beta=Y-\beta X$ and $\epsilon_0=Y-\beta_0 X$. Define
\begin{equation}
H_n^{(0)}(\beta,s) = \mathbb{P}_n \{1(\epsilon_\beta \leq s, \Delta = 1)\}, \
h^{(0)}(\beta,s) = P \{1(\epsilon_\beta \leq s, \Delta = 1)\}; \label{eq:proc1}
\end{equation}
and
\begin{equation}
H_n^{(1)}(\beta,s) = \mathbb{P}_n \{1(\epsilon_\beta \geq s)\}, \
h^{(1)}(\beta,s) = P \{1(\epsilon_\beta \geq s)\}. \label{eq:proc2}
\end{equation}

Since $\alpha_0 = Ee_{0,i} = \int_{-\infty}^{\infty} t\ dF(t)$, if the slope $\beta_0$ is known, then a natural estimator of $\alpha_0$ is given by
\begin{equation} \label{eq:alpha1}
\hat{\alpha}_n = \int_{-\infty}^{\infty} t\ d\hat{F}_n(t),
\end{equation}
where $\hat{F}_n(t)$ is the Kaplan-Meier estimator of the distribution function $F(t)$ of $e_0 = T-\beta_0 X$. In a regression setting, however, $\beta_0$ is unknown and hence needs to be estimated. Let $\hat{\beta}_n$ be an estimator of $\beta_0$, a direct extension of (\ref{eq:alpha1}) yields the estimator of interest:
\begin{equation} \label{eq:alpha2}
\hat{\alpha}_{n,\hat{\beta}_n} = \int_{-\infty}^{\infty} t\ d\hat{F}_{n,\hat{\beta}_n}(t),
\end{equation}
where $\hat{F}_{n,\beta}(t)$ is the Kaplan-Meier estimator of the distribution function $F_\beta(t)$ of $e_\beta=T-\beta X$, which is given by
\begin{equation} \label{eq:KM_b}
\hat{F}_{n,\beta}(t) = 1 - \prod_{i: \epsilon_{\beta,i}\leq t}\biggl\{1-\frac{\Delta_i/n}{H_n^{(1)}(\beta,\epsilon_{\beta,i})}\biggr\}.
\end{equation}
Note that the above estimator does not automatically provide a consistent estimator of $F_\beta(t)$ because $T-\beta X$ and $C-\beta X$ are not independent except when $\beta=\beta_0$. We will follow the method of \citet{LaiYing1991} to show that $\hat{F}_{n,\hat\beta_n}(t)$ converges to $F(t)$ when $\hat\beta_n$ converges to $\beta_0$ with certain polynomial rate.

\citet{Sursarla1984} showed that the above $\hat\alpha_{n, \hat\beta_n}$ is identical to the Buckley-James estimator of $\alpha_0$ for a fixed $\hat\beta_n$. When there is no covariates (equivalently  $\beta_0=0$) or $\beta_0$ is given, \citet{StuteWang} and \citet{Sursarla1980} studied the asymptotic properties of the mean survival time estimator (\ref{eq:alpha1}). They provided the following key sufficient condition
\begin{equation} \label{eq:cond}
\{t: t \in \mbox{ the support of } T-\beta_0 X \} \subseteq \{t: t \in \mbox{ the support of } C-\beta_0 X \}
\end{equation}
for the consistency of (\ref{eq:alpha1}). Now we replace $\beta_0$ by its estimator $\hat{\beta}_n$ and want to show the consistency of (\ref{eq:alpha2}). The proof of \citet{StuteWang} for the consistence of the mean survival time estimation makes use of the martingale theory that cannot be directly adopted here due to the dependence between $T-\beta X$ and $C-\beta X$ when $\beta \neq \beta_0$. We shall use the empirical process theory as well as the properties of stochastic integrals with censored data in \citet{LaiYing1988} together with the delicate controlling of the tail fluctuations used by \citet{LaiYing1991} and \citet{Ying1993} to show the desirable result.


First, we introduce the following regularity conditions:
\begin{enumerate}
\item[] {\em Condition 1.} The covariates $X_i$ are i.i.d. random variables with a finite second moment.

\item[] {\em Condition 2.} The error $e_0$'s density $f$ and its derivative $\dot{f}$ are bounded and satisfy
    $$\int_{-\infty}^{\infty} \bigl(\dot{f}(t)/f(t)\bigr)^2f(t)\ dt < \infty.$$

\item[] {\em Condition 3.} The conditional density of $C$ given $X$ is
uniformly bounded for all possible values of $X$, i.e.,
    $$ \displaystyle \sup_{x \in \mathcal{X},\ t\in \mathcal{C}} g_{C|X}(t \mid X=x) < \infty, $$
    where $\mathcal{X}$ and $\mathcal{C}$ denote the support of $X$ and $C$, respectively.

\item[] {\em Condition 4.} The error $e_0$ has a finite second moment, i.e., $Ee_0^2<\infty$.
\end{enumerate}

Condition 1 is different to the common assumption of bounded covariates in \citet{LaiYing1991, Tsiatis, Ying1993}, and many others. Here we only assume finite second moment. Hence, even with a short follow-up time, the support of the censoring time in the residual scale can be extended to infinity provided that the support of $X$ is the real line and $\beta_0 \neq 0$, which yields that the supports of $e_0$ and $C-\beta_0 X$ are equivalent and thus the sufficient condition (\ref{eq:cond}) is satisfied. This requirement is for theoretical justification, whereas in practice, wide support for $X$ works reasonably well. Condition 2 is exactly the same as Condition 2 in \citet{Ying1993}. Condition 3 implies Condition 3 in \citet{Ying1993} as well as Condition (3.5) in \citet{LaiYing1991}. Condition 4 implies Condition 4 in \citet{Ying1993} where $\theta_0 = 2$.

In the following Theorems \ref{thm:consis1} and \ref{thm:consis2}, we omit the constants in front of the rate expressions to further simplify the notation.

\begin{thm} \label{thm:consis1}
Suppose Conditions 1-3 hold, and define
\begin{equation} \label{eq:F(b,t)}
F(\beta,t) = 1 - \exp\biggl\{-\int_{u\leq t} \ \frac{d h^{(0)}(\beta,u)}{h^{(1)}(\beta,u)}\biggr\},
\end{equation}
where $h^{(0)}(\beta,u)$ and $h^{(1)}(\beta,u)$ are given in (\ref{eq:proc1}) and (\ref{eq:proc2}), respectively. Then for every $\varepsilon>0$, with probability 1 we have
\begin{equation}
\sup\bigl\{|\hat{F}_{n,\beta}(t)-F(\beta,t)|:\beta \in \mathcal{B}, H_n^{(1)}(\beta,t)\geq  n^{-\varepsilon}\bigr\}=o(n^{-\frac{1}{2}+3\varepsilon}) \label{eq:thm1.1}
\end{equation}
and
\begin{equation}
\sup\bigl\{|F(\beta,t)-F(t)|:|\beta-\beta_0|\leq n^{-3\varepsilon}, h^{(1)}(\beta,t)\geq n^{-\varepsilon}\bigr\} = O(n^{-\varepsilon}), \label{eq:thm1.2}
\end{equation}
where $\hat{F}_{n,\beta}(t)$ is given in (\ref{eq:KM_b}). Consequently, for every $0<\varepsilon \leq \frac{1}{8}$, with probability 1 we have
\begin{equation}
\sup\big\{|\hat{F}_{n,\beta}(t)-F(t)|:|\beta-\beta_0|\leq n^{-3\varepsilon}, H_n^{(1)}(\beta,t)\geq n^{-\varepsilon}\big\} = O(n^{-\varepsilon}). \label{eq:thm1.3}
\end{equation}
\end{thm}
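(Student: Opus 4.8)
The plan is to route everything through the cumulative hazard functions. Writing $\Lambda(\beta,t)=\int_{u\le t} dh^{(0)}(\beta,u)/h^{(1)}(\beta,u)$ so that $1-F(\beta,t)=e^{-\Lambda(\beta,t)}$ by (\ref{eq:F(b,t)}), and introducing the Nelson--Aalen counterpart $\hat\Lambda_n(\beta,t)=\int_{u\le t} dH_n^{(0)}(\beta,u)/H_n^{(1)}(\beta,u)$, the first reduction is to pass from the product-limit form (\ref{eq:KM_b}) to $\hat\Lambda_n$. Taking logarithms in (\ref{eq:KM_b}) and using $\log(1-x)=-x+O(x^2)$, the remainder is a sum of squared jumps, each jump being $\Delta_i/(nH_n^{(1)})\le n^{-1+\varepsilon}$ on the region $H_n^{(1)}\ge n^{-\varepsilon}$; since the jumps sum to $\hat\Lambda_n=O(n^\varepsilon)$ there, the remainder is $O(n^{-1+2\varepsilon})$, negligible relative to the target. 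Because $\log(1-\hat F_{n,\beta})$ and $-\Lambda$ are both nonpositive, $|\hat F_{n,\beta}(t)-F(\beta,t)|\le|\log(1-\hat F_{n,\beta}(t))+\Lambda(\beta,t)|\le|\hat\Lambda_n(\beta,t)-\Lambda(\beta,t)|+O(n^{-1+2\varepsilon})$, so (\ref{eq:thm1.1}) reduces to bounding $\sup|\hat\Lambda_n-\Lambda|$ over the restricted set by $o(n^{-1/2+3\varepsilon})$.

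For that bound I would first establish the empirical-process foundation: the two classes of indicators $\{(y,\delta,x)\mapsto 1(y-\beta x\le s,\delta=1)\}$ and $\{(y,x)\mapsto 1(y-\beta x\ge s)\}$, indexed by $(\beta,s)$, are VC classes, since each defining region is cut out by a single affine inequality in $(\beta,s)$, and they remain so even though $X$ is unbounded. Hence the uniform law of the iterated logarithm for VC classes gives $\sup_{\beta,s}|H_n^{(j)}(\beta,s)-h^{(j)}(\beta,s)|=O(n^{-1/2}(\log\log n)^{1/2})$ a.s.\ for $j=0,1$, independent of the range of $X$. On $\{H_n^{(1)}\ge n^{-\varepsilon}\}$ this deviation forces $h^{(1)}\ge \tfrac12 n^{-\varepsilon}$ for large $n$, so both denominators are bounded below by a multiple of $n^{-\varepsilon}$. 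I then split $\hat\Lambda_n-\Lambda=\int d(H_n^{(0)}-h^{(0)})/H_n^{(1)}+\int dh^{(0)}\bigl(h^{(1)}-H_n^{(1)}\bigr)/\bigl(H_n^{(1)}h^{(1)}\bigr)$. The second term carries two reciprocal weights, hence a factor $n^{2\varepsilon}$, against $\sup|H_n^{(1)}-h^{(1)}|$, giving $O(n^{-1/2+2\varepsilon}(\log\log n)^{1/2})=o(n^{-1/2+3\varepsilon})$. The first (martingale-type) term I would handle by integration by parts: the boundary term at $t$ is $O(n^\varepsilon)\cdot\sup|H_n^{(0)}-h^{(0)}|$, and since $1/H_n^{(1)}(\beta,\cdot)$ is nondecreasing its total variation on $\{u\le t\}$ telescopes to $1/H_n^{(1)}(\beta,t)-1\le n^\varepsilon$, so the remaining integral is at most $\sup|H_n^{(0)}-h^{(0)}|\cdot n^\varepsilon$; both are $o(n^{-1/2+3\varepsilon})$. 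Uniformity over $\beta$ is what the manageability of the classes buys us, and where needed I would reinforce the fluctuation control of this stochastic integral with the censored-data stochastic-integral estimates of \citet{LaiYing1988}.

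Statement (\ref{eq:thm1.2}) is purely deterministic and is proved by the same split applied to $\Lambda(\beta,\cdot)-\Lambda(\beta_0,\cdot)$, noting $F(\beta_0,t)=F(t)$ because $T-\beta_0X$ and $C-\beta_0X$ are independent. Conditions 1--3 make $h^{(0)}(\beta,s)$ and $h^{(1)}(\beta,s)$ Lipschitz in $\beta$ uniformly in $s$: differentiating in $\beta$ produces terms of the form $E[X\cdot(\text{bounded density})]$, finite by the finite-second-moment assumption on $X$ together with the bounded densities. Thus $\sup_s|h^{(j)}(\beta,s)-h^{(j)}(\beta_0,s)|\le L|\beta-\beta_0|\le Ln^{-3\varepsilon}$. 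On $\{h^{(1)}(\beta,t)\ge n^{-\varepsilon}\}$ the dominant contribution is again the reciprocal-difference term, whose two weights contribute $n^{2\varepsilon}$ against the numerator $n^{-3\varepsilon}$, i.e.\ $O(n^{-\varepsilon})$; the integration-by-parts term is $O(n^{-2\varepsilon})$ and negligible.

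Finally (\ref{eq:thm1.3}) follows from the triangle inequality $|\hat F_{n,\beta}-F|\le|\hat F_{n,\beta}-F(\beta,\cdot)|+|F(\beta,\cdot)-F|$. The constraint $\varepsilon\le\tfrac18$ is exactly what makes $n^{-1/2+3\varepsilon}\le n^{-\varepsilon}$, so the first term from (\ref{eq:thm1.1}) is absorbed into the $O(n^{-\varepsilon})$ coming from (\ref{eq:thm1.2}); and the a.s.\ deviation bound lets me pass from $H_n^{(1)}(\beta,t)\ge n^{-\varepsilon}$ on the combined region to $h^{(1)}(\beta,t)\ge\tfrac12 n^{-\varepsilon}$, so that (\ref{eq:thm1.2}) applies there. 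The main obstacle throughout is the uniform-in-$\beta$ control of the martingale-type stochastic integral in (\ref{eq:thm1.1}) under only a finite-second-moment condition on the covariates, since the bounded-covariate machinery of \citet{LaiYing1991} and \citet{Ying1993} does not transfer directly; reconciling the empirical cutoff $\{H_n^{(1)}\ge n^{-\varepsilon}\}$ with the population lower bound on $h^{(1)}$ in the tails is the delicate point.
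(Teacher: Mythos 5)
Your proposal is correct and follows essentially the same route as the paper's proof: the same log-expansion from the product-limit form to the Nelson--Aalen integral with an $O(n^{-1+2\varepsilon})$ remainder, the same two-term decomposition of the cumulative-hazard difference treated by integration by parts with the $n^{\varepsilon}$-counting of reciprocal weights (which the paper packages abstractly as Lemma \ref{lem:lem3} applied to the rescaled truncated weights $n^{-2\varepsilon}/\tilde{H}_n^{(1)}$), the same Lipschitz-in-$\beta$ argument for (\ref{eq:thm1.2}) (the paper's Lemma \ref{lem:lem2}), and the same triangle-inequality step with $\varepsilon\leq\frac{1}{8}$ for (\ref{eq:thm1.3}). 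The only cosmetic difference is that you invoke a uniform LIL for VC classes where the paper obtains the a.s.\ uniform rate $o(n^{-1/2+\varepsilon})$ from an exponential tail bound plus Borel--Cantelli (Lemma \ref{lem:lem1}); both deliver what the argument needs.
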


Introduced by \citet{LaiYing1991}, $F(\beta,t)$ defined in (\ref{eq:F(b,t)}) is an important intermediate quantity. On the one hand, it is the limit of the Kaplan-Meier estimator $\hat{F}_{n,\beta}(t)$ for a fixed $\beta$; on the other hand, it equals to  $F(t)$, the true distribution function of $e_0$, when $\beta$ is replaced by the true slope $\beta_0$ in (\ref{eq:F(b,t)}). The biggest difference between the above Theorem \ref{thm:consis1} and Lemma 2 of \citet{LaiYing1991} is that we do not require bounded covariate support.

\begin{thm} \label{thm:consis2}
Suppose Conditions 1-4 hold, and additionally assume $\beta_0 \ne 0$ and that the support of $X$ is the whole real line, i.e., $f_X(x)>0$ for all $-\infty<x<\infty$. Define
\begin{equation}
T_n = \sup\big\{t: H_n^{(1)}(\beta,t) \geq n^{-\varepsilon}, |\beta-\beta_0|\leq n^{-3\varepsilon} \big\} \label{eq:Tn}
\end{equation}
and let $\hat{F}_{n,\beta}(t) = 1$ for $t>T_n$.
Then for every $0<\varepsilon\leq \frac{1}{8}$, with probability 1 we have
\begin{equation}
\sup\biggl\{\biggl|\int_{-\infty}^{\infty}  t \, d\hat{F}_{n,\beta}(t)-\alpha_0 \biggr|: |\beta-\beta_0| \leq n^{-3\varepsilon} \biggr\}=o(1). \label{eq:thm2}
\end{equation}
\end{thm}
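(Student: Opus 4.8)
The plan is to establish \eqref{eq:thm2} by decomposing the target quantity into three pieces: a truncated integral over the region where the Kaplan--Meier estimator is well-controlled by Theorem \ref{thm:consis1}, a deterministic approximation error, and a tail contribution beyond the truncation point $T_n$. Write $\int_{-\infty}^\infty t\, d\hat F_{n,\beta}(t) - \alpha_0 = \bigl(\int_{-\infty}^{T_n} t\, d\hat F_{n,\beta}(t) - \int_{-\infty}^{T_n} t\, dF(t)\bigr) + \bigl(\int_{-\infty}^{T_n} t\, dF(t) - \alpha_0\bigr)$, where the convention $\hat F_{n,\beta}(t)=1$ for $t>T_n$ eliminates any mass above $T_n$ in the estimator. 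The second bracket is the truncation bias $-\int_{T_n}^\infty t\, dF(t)$, which tends to $0$ almost surely provided $T_n \to \infty$ and $Ee_0^2 < \infty$ (Condition 4) controls the tail via dominated convergence once $T_n$ diverges. The first bracket is where the uniform consistency from Theorem \ref{thm:consis1} enters, and it must be handled by an integration-by-parts argument so that the sup-norm bound on $|\hat F_{n,\beta}(t) - F(t)|$ can be leveraged directly.

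First I would verify that $T_n \to \infty$ almost surely, uniformly over $|\beta-\beta_0|\le n^{-3\varepsilon}$. This is precisely where the unbounded-covariate assumption ($\beta_0\ne 0$, $f_X$ positive on all of $\mathbb R$) does the essential work: it guarantees that the residual censoring time $C-\beta X$ has support extending to $+\infty$, so that $h^{(1)}(\beta,t)=P\{\epsilon_\beta \ge t\}$ stays positive for arbitrarily large $t$, and by the strong law its empirical counterpart $H_n^{(1)}(\beta,t)$ exceeds the threshold $n^{-\varepsilon}$ out to a point that grows without bound. I would make this quantitative by lower-bounding $h^{(1)}(\beta,t)$ for large $t$ using Condition 1 (finite second moment of $X$) and the positivity of $f_X$, then transferring the bound to $H_n^{(1)}$ via the uniform convergence already available from the empirical process machinery underlying Theorem \ref{thm:consis1}.

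Next I would treat the first bracket via integration by parts: $\int_{-\infty}^{T_n} t\, d(\hat F_{n,\beta} - F)(t) = T_n\bigl(\hat F_{n,\beta}(T_n) - F(T_n)\bigr) - \int_{-\infty}^{T_n} \bigl(\hat F_{n,\beta}(t) - F(t)\bigr)\, dt$ (after splitting at $0$ to handle the two-sided integral and the boundary terms at $\pm\infty$ carefully). On the set $\{H_n^{(1)}(\beta,t)\ge n^{-\varepsilon}\}$, which by definition of $T_n$ covers $t\le T_n$, \eqref{eq:thm1.3} gives $|\hat F_{n,\beta}(t) - F(t)| = O(n^{-\varepsilon})$ uniformly in $\beta$ and $t$. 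The boundary term $T_n\bigl(\hat F_{n,\beta}(T_n)-F(T_n)\bigr)$ is then bounded by $T_n \cdot O(n^{-\varepsilon})$, and the integral term by $(T_n - (-\infty\text{-cutoff}))\cdot O(n^{-\varepsilon})$; both are controlled once we show $T_n$ grows no faster than a suitable power of $n$, so that $T_n \cdot n^{-\varepsilon} \to 0$.

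The main obstacle I anticipate is the simultaneous control of the growth rate of $T_n$ against the decay rate $n^{-\varepsilon}$ from Theorem \ref{thm:consis1}. The truncation point $T_n$ must diverge fast enough to kill the truncation bias (second bracket), yet slowly enough that $T_n\, n^{-\varepsilon}\to 0$ keeps the boundary and integral terms (first bracket) negligible. Pinning down a polynomial upper bound $T_n = O(n^{c\varepsilon})$ with $c$ small enough to satisfy $c\varepsilon < \varepsilon$ requires a careful tail estimate: I would combine the Gaussian-type or moment-based tail decay of $h^{(1)}(\beta,t)$ (inherited from the finite second moment of $X$ and the boundedness of $f$ in Condition 2) with the $n^{-\varepsilon}$ threshold to solve for the largest $t$ at which the threshold is met, thereby extracting the rate of $T_n$. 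Balancing these two competing rates, and doing so uniformly over the shrinking neighborhood $|\beta-\beta_0|\le n^{-3\varepsilon}$, is the delicate technical heart of the argument; everything else reduces to the integration-by-parts bookkeeping and invoking the already-established uniform consistency.
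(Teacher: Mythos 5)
Your overall architecture is the same as the paper's: truncate at $T_n$, apply the uniform $O(n^{-\varepsilon})$ bound from Theorem \ref{thm:consis1} on the truncated region, show $T_n \to \infty$ almost surely from the unbounded-covariate assumption, and kill the truncation bias with a second-moment tail bound (the paper writes the mean through the tail-integral identity $\int t\, dF = \int_0^\infty\{1-F(t)\}\,dt - \int_{-\infty}^0 F(t)\,dt$ rather than integrating by parts, but that is the same computation). The genuine gap is in the step you yourself flag as the ``delicate technical heart'': the polynomial upper bound on $T_n$. You propose to extract it from tail decay of $h^{(1)}(\beta,t)$ ``inherited from the finite second moment of $X$ and the boundedness of $f$ in Condition 2.'' Neither ingredient can deliver this: boundedness of $f$ says nothing about tails, and the moments of $X$ alone cannot bound $P(\epsilon_\beta \ge t)$, because $\epsilon_\beta = \min(T,C)-\beta X$ has a heavy right tail exactly when $e_0$ does no matter how light the tail of $X$ is, and the censoring time $C$ is not assumed bounded, so you cannot route the bound through the censoring side either. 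The condition that actually does the work is Condition 4, $Ee_0^2<\infty$, which you invoke only for the truncation bias. The paper's argument is short: at $\beta=\beta_0$, independence of $e_0$ and $C-\beta_0 X$ gives $h^{(1)}(\beta_0,t) = P(e_0\ge t)\,P(C-\beta_0 X\ge t) \le Ee_0^2/t^2$ by Chebyshev; since the thresholding that defines $T_n$ forces $h^{(1)}(\beta_0,T_n)\ge n^{-\varepsilon}$ (after transferring from $H_n^{(1)}$ to $h^{(1)}$ via the almost sure uniform convergence in Lemma \ref{lem:lem1}), this yields $T_n = O(n^{\varepsilon/2})$, hence $T_n\cdot O(n^{-\varepsilon}) = O(n^{-\varepsilon/2})=o(1)$. (A variant valid uniformly over the neighborhood: bound $\epsilon_\beta \le e_0+(\beta_0-\beta)X$ and apply Chebyshev using Conditions 1 and 4 jointly.) So the ``delicate balancing'' you anticipate resolves itself --- Chebyshev gives exponent $\varepsilon/2 < \varepsilon$ with room to spare; what is needed is not a careful balance but the correct condition.

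Two smaller bookkeeping points. First, setting $\hat F_{n,\beta}(t)=1$ for $t>T_n$ does not ``eliminate'' mass above $T_n$; it relocates the remaining mass $1-\hat F_{n,\beta}(T_n)$ to the point $T_n$, so your first bracket must absorb the extra term $T_n\{1-\hat F_{n,\beta}(T_n)\}$. It is controlled by the same two estimates, $T_n|F(T_n)-\hat F_{n,\beta}(T_n)| \le T_n\, O(n^{-\varepsilon})$ and $T_n\{1-F(T_n)\}\le Ee_0^2/T_n$, but it has to appear; the paper's tail-integral representation handles it automatically, since $\int_0^\infty\{1-\hat F_{n,\beta}(t)\}\,dt$ simply terminates at $T_n$. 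Second, your left tail needs an extra observation beyond Markov's inequality on $F$: on $(-\infty,-T_n)$ the sup-norm bound from Theorem \ref{thm:consis1} is unavailable, and the paper separately uses that $\int_{-\infty}^{-T_n}\hat F_{n,\beta}(t)\,dt \to 0$ almost surely (the Kaplan--Meier estimator vanishes below the smallest residual).
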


It is clearly seen from Theorem \ref{thm:consis2} that $\hat\alpha_{n, \hat\beta_n}$ given in (\ref{eq:alpha2}) is a consistent estimator of the intercept $\alpha_0$ when $\hat\beta_n$ is consistent with a polynomial convergence rate.
This requires a good estimator of the slope parameter $\beta_0 \ne 0$ under Conditions 1-4 as well as the assumption of unbounded support for $X$. In the next section we show that such an estimator can be obtained by the Gehan-weighted rank based estimating method.

\section{Slope estimation with unbounded covariate support} \label{sec:SlpEst}

Define
\begin{equation}
H_n^{(2)}(\beta,s)=\mathbb{P}_n \{1(\epsilon_\beta \geq s)X\}\quad \mbox{and} \quad h^{(2)}(\beta,s)=P\{1(\epsilon_\beta \geq s)X\}. \label{eq:proc3}
\end{equation}
Then the general rank-based estimating function of \citet{Tsiatis} is given by
\begin{equation} \label{eq:map1}
\mathbb{P}_n \left\{\omega_n(\beta,\epsilon_\beta)\left[ X - \frac{H_n^{(2)}(\beta,\epsilon_\beta)} {H_n^{(1)}(\beta,\epsilon_\beta)}\right]\Delta \right\},
\end{equation}
where $\omega_n(\beta,s)$ is a weight function and $H_n^{(1)}(\beta,s)=\mathbb{P}_n\{1(\epsilon_\beta \geq s)\}$ is defined in (\ref{eq:proc2}). We consider the Gehan weight function $\omega_n(\beta,s) = H_n^{(1)}(\beta,s)$ in (\ref{eq:map1}), which yields the following estimating function
\begin{equation}
\Psi_n\left(\beta,H_n^{(1)},H_n^{(2)}\right)= \mathbb{P}_n\left\{\left[H_n^{(1)}(\beta,\epsilon_\beta)X - H_n^{(2)}(\beta,\epsilon_\beta)\right]\Delta \right\}.  \label{eq:map2}
\end{equation}
It is well-known that the above estimating function is a discrete \citep{Kalbfleisch} and monotone function \citep{Fygenson} of $\beta$, and can be solved by linear programming \citep{Jin2003, LinWeiYing1998} or by a Newton-type algorithm \citep{YuNan2006}.

\subsection{Convergence in probability and asymptotic normality} \label{subsec:ConvProb}

The reason of assuming bounded covariates and/or truncated residual time in the current literature is to bound the denominator $H_n^{(1)}(\beta,\epsilon_\beta)$ in (\ref{eq:map1}) away from zero. Such an issue disappears in (\ref{eq:map2}) when the Gehan weight function is used. Without concerning bounding the denominator away from zero, we can follow the same proofs in \citet{NanEtal2009} to obtain the following results. Details are thus omitted here, but referred to \citet{Ding}.

\begin{proposition} \label{prop:beta_consis1}
Suppose Conditions 1-3 hold. Assume $\beta_0 \in \mathcal{B}$ is the unique root of $\Psi\left(\beta,h^{(1)},h^{(2)}\right)=P\left\{\left[h^{(1)}(\beta,\epsilon_{\beta})X - h^{(2)}(\beta,\epsilon_{\beta})\right]\Delta \right\}$.

(1) The approximate root $\hat{\beta}_n$ satisfying
$$\Psi_n\left(\hat{\beta}_n,H_n^{(1)}(\hat{\beta}_n,\cdot),H_n^{(2)}(\hat{\beta}_n,\cdot)\right)=o_p(1)$$
is a consistent estimator of $\beta_0$.

(2) Suppose that $\Psi\left(\beta,h^{(1)}(\beta, \cdot),h^{(2)}(\beta, \cdot)\right)$ is differentiable with bounded and continuous derivative $\dot{\Psi}_\beta\left(\beta,h^{(1)}(\beta,\cdot), h^{(2)}(\beta,\cdot)\right)$ in a neighborhood of $\beta_0$, and that $\dot{\Psi}_\beta\left(\beta_0,h^{(1)}(\beta_0,\cdot),h^{(2)}(\beta_0,\cdot)\right)$ is nonsingular. Then for an approximate root $\hat{\beta}_n$ satisfying
$$\Psi_n\left(\hat{\beta}_n, H_n^{(1)}(\hat{\beta}_n,\cdot), H_n^{(2)}(\hat{\beta}_n,\cdot)\right) = o_p(n^{-1/2}),$$ we have that $n^{1/2}(\hat{\beta}_n-\beta_0)$ is asymptotically normal with the following asymptotic representation
\begin{equation*}
n^{1/2}(\hat{\beta}_n-\beta_0) = \mathbb{G}_n\{m(\beta_0,\epsilon_0;\Delta,X)\} + o_p(1),
\end{equation*}
where
\begin{eqnarray}
&& \hspace{-0.4in} m(\beta_0,\epsilon_0;\Delta,X) \nonumber  \\\label{eq:beta_normality}
&& \hspace{-0.2in} = \left\{-\dot{\Psi}_\beta\left(\beta_0,h^{(1)}(\beta_0,\cdot),h^{(2)}(\beta_0,\cdot)\right)\right\}^{-1} \biggr\{\left[h^{(1)}(\beta_0,\cdot)X-h^{(2)}(\beta_0,\cdot)\right]\Delta \\
&& \qquad  - \int [1(\epsilon_0 \geq t)X]\ dP_{\epsilon_0,\Delta}(t,1) + \int [1(\epsilon_0\geq t)]x\ dP_{\epsilon_0,\Delta,X}(t,1,x)\biggl\}. \nonumber
\end{eqnarray}
\end{proposition}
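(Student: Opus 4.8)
The plan is to recognize $\hat{\beta}_n$ as an approximate Z-estimator and to apply the master consistency and asymptotic-linearity theorems for estimating equations from \citet{vanWellner1996}, the single structural change being that the customary boundedness of $X$ is everywhere replaced by the finite second moment of Condition 1. The natural starting point is the V-statistic form of the Gehan estimating function,
\begin{equation*}
\Psi_n\bigl(\beta, H_n^{(1)}, H_n^{(2)}\bigr) = \frac{1}{n^2}\sum_{i=1}^n\sum_{j=1}^n \Delta_i\,(X_i - X_j)\,1(\epsilon_{\beta,i}\leq \epsilon_{\beta,j}),
\end{equation*}
which shows that only differences $X_i - X_j$ enter and that, by \citet{Fygenson}, $\Psi_n$ is monotone in $\beta$. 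This monotonicity, together with the assumed uniqueness of the population root, delivers the well-separation of $\beta_0$ that the consistency argument requires.

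For part (1) I would first show that $\sup_{\beta\in\mathcal{B}}|\Psi_n(\beta,H_n^{(1)},H_n^{(2)}) - \Psi(\beta,h^{(1)},h^{(2)})| \to_p 0$. The indicator family $\{(y,x)\mapsto 1(y-\beta x\geq s): \beta\in\mathcal{B},\ s\in\mathbb{R}\}$ is VC-subgraph and hence Glivenko--Cantelli; because Condition 1 supplies the integrable envelope $|X|$, the product classes that define $H_n^{(2)}$ remain Glivenko--Cantelli, so $H_n^{(1)}$ and $H_n^{(2)}$ converge uniformly to $h^{(1)}$ and $h^{(2)}$. The same envelope argument makes the outer average Glivenko--Cantelli, and since the map $(H^{(1)},H^{(2)})\mapsto\Psi$ is Lipschitz in the sup-norm of its arguments with an $|X|$-integrable modulus, substituting $H_n^{(k)}$ for $h^{(k)}$ is asymptotically negligible. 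Uniform convergence, well-separation, and the approximate-root hypothesis then give $\hat{\beta}_n\to_p\beta_0$ by the standard Z-estimator consistency theorem.

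For part (2) I would invoke the Z-estimator linearization of \citet{vanWellner1996}: granted consistency, the nonsingular differentiable derivative $\dot{\Psi}_\beta$ assumed in the statement, and the sharper approximate-root rate $o_p(n^{-1/2})$, it remains to establish an asymptotic equicontinuity (Donsker) condition and to identify the influence function. The Donsker step is where Condition 1 is indispensable: the VC indicator classes carry the square-integrable envelope $|X|$, so by the Donsker preservation theorems the products $1(\epsilon_\beta\geq s)X$ form a $P$-Donsker class, yielding the required stochastic equicontinuity of $\mathbb{G}_n$ uniformly over a neighborhood of $\beta_0$ and allowing the argument to be evaluated at $\beta_0$. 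Since $\Psi(\beta_0,h^{(1)},h^{(2)})=0$, it then suffices to linearize $n^{1/2}\Psi_n(\beta_0,H_n^{(1)},H_n^{(2)})$ through the H\'{a}jek projection of the V-statistic. The first projection, in which a fixed observation plays the outer index $i$ carrying $(\Delta_i,X_i,\epsilon_{0,i})$, produces the leading term $[h^{(1)}X-h^{(2)}]\Delta$; the second projection, in which the fixed observation enters through the inner index $j$ supplying the indicator threshold and $X_j$, produces the two integral corrections in (\ref{eq:beta_normality}). Premultiplying the projected summand by $\{-\dot{\Psi}_\beta\}^{-1}$ gives exactly $m(\beta_0,\epsilon_0;\Delta,X)$ and hence the stated representation.

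The main obstacle I anticipate is the Donsker (and Glivenko--Cantelli) verification under unbounded $X$. In the bounded-covariate literature the relevant envelopes are trivially bounded, whereas here one must combine the VC structure of the half-line indicators with the square-integrability of $X$ through the product-preservation theorems, and then control the second-order degenerate part of the V-statistic projection so that it is $o_p(n^{-1/2})$ despite the unbounded multiplier $X_i - X_j$. Once finite second moment is shown to yield both the Glivenko--Cantelli and the Donsker properties, the remaining details follow the proofs in \citet{NanEtal2009} essentially verbatim.
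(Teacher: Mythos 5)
Your proposal is correct and follows essentially the same route as the paper, which itself omits the details and instead invokes the empirical-process Z-estimation proofs of \citet{NanEtal2009} (noting that the Gehan weight removes the need to bound $H_n^{(1)}$ away from zero), with the only substantive modification being exactly the one you identify: replacing bounded covariates by the finite-second-moment envelope of Condition 1 in the Glivenko--Cantelli and Donsker preservation steps. Your V-statistic/H\'{a}jek-projection derivation of the influence function is just an equivalent rewriting of the linearization of $\Psi_n\bigl(\beta,H_n^{(1)},H_n^{(2)}\bigr)$ in its nuisance arguments, and it reproduces the three terms of (\ref{eq:beta_normality}) correctly.
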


The above Proposition \ref{prop:beta_consis1} implies that $|\hat{\beta}_n - \beta_0| =O_p(n^{-3\varepsilon})$ for any $0 < \varepsilon \leq \frac{1}{8}$ with probability approaching 1. Hence we have that $\hat\alpha_{n, \hat\beta_n}$ converges to $\alpha_0$ in probability by Theorem \ref{thm:consis2}.

\subsection{Almost sure convergence with polynomial rate} \label{subsec:Convas}
Following Theorem 5 in \citet{Ying1993}, the almost sure consistency of the Gehan-weighted rank based slope estimator $\hat\beta_n$ with a polynomial rate can also be achieved under the unbounded covariate support assumption, which leads to the strong convergence of the intercept estimator $\hat\alpha_{n,\hat\beta_n}$ from Theorem \ref{thm:consis2}.

\begin{proposition} \label{prop:beta_consis2}
Suppose all the assumptions in Theorem \ref{thm:consis2} hold, and additionally we assume that the tail probability of $X$ satisfies
\begin{equation} \label{eq:tailX}
P(|X|>t) \leq Mt^{\theta}\exp(-\eta t^{\gamma})
\end{equation}
for some constants $M>0$, $|\theta| < \infty$, $\eta>0$, and $\gamma>0$. Then with probability 1 the estimator $\hat{\beta}_n$ satisfying $\Psi_n\left(\hat{\beta}_n, H_n^{(1)}(\hat{\beta}_n,\cdot), H_n^{(2)}(\hat{\beta}_n,\cdot)\right) = o(n^{-1/2})$ converges to $\beta_0$ with a polynomial rate, i.e., $|\hat{\beta}_n-\beta_0| = o(n^{-1/2+\varepsilon})$ almost surely for every $\varepsilon>0$.
\end{proposition}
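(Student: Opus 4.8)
The plan is to follow the proof of Theorem 5 in \citet{Ying1993}, in which the bounded-covariate assumption is used essentially only to furnish a covariate-free bound in the exponential inequalities that drive the almost sure rate. The idea is therefore to replace that bound by a slowly growing truncation level and to control the discarded mass through the tail condition (\ref{eq:tailX}). Throughout I would exploit the double-sum (U-statistic) form
\[
\Psi_n\bigl(\beta,H_n^{(1)},H_n^{(2)}\bigr)= n^{-2}\sum_{i=1}^n\sum_{j=1}^n \Delta_i\,(X_i-X_j)\,1(\epsilon_{\beta,i}\leq \epsilon_{\beta,j}),
\]
which displays $\Psi_n$ as a bounded-kernel statistic once the covariates are truncated, and the monotonicity of $\Psi_n$ in $\beta$ \citep{Fygenson} to pass from pointwise to uniform control. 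Below I abbreviate $\Psi_n(\beta)=\Psi_n(\beta,H_n^{(1)},H_n^{(2)})$ and $\Psi(\beta)=\Psi(\beta,h^{(1)},h^{(2)})$.

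First I would truncate the covariates at $b_n=(c\log n)^{1/\gamma}$. By (\ref{eq:tailX}), $P(|X|>b_n)\leq M b_n^{\theta}\,n^{-\eta c}$, so for $c$ large enough $\sum_n n\,P(|X|>b_n)<\infty$; Borel--Cantelli then gives that, with probability one, $\max_{i\leq n}|X_i|\leq b_n$ for all sufficiently large $n$, so the truncated estimating function coincides with $\Psi_n$ eventually. This reduces matters to a kernel bounded by $2b_n$, at the price of a truncation level that grows only polylogarithmically.

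Next I would establish the strong uniform approximation
\[
\sup_{|\beta-\beta_0|\leq \delta}\bigl|\Psi_n(\beta)-\Psi(\beta)\bigr| = o\bigl(n^{-1/2+\varepsilon}\bigr)\quad\text{almost surely.}
\]
For fixed $\beta$, a Bernstein-type exponential inequality for the truncated U-statistic yields a deviation bound of order $\exp(-c'\,n\lambda^2/b_n^2)$; taking $\lambda=n^{-1/2+\varepsilon}$ produces the exponent $-c'\,n^{2\varepsilon}/b_n^2$, which is summable in $n$ because $b_n^2$ is polylogarithmic, so Borel--Cantelli gives the pointwise rate. To make this uniform in $\beta$, I would apply the bound on a grid of $O(n^{p})$ points in a neighborhood of $\beta_0$ and interpolate using the monotonicity of $\Psi_n$, as in \citet{Ying1993} and \citet{Jin2003}; the union bound over polynomially many points is still dominated by the exponential factor.

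Finally I would turn the process rate into the estimator rate. Uniform convergence together with the uniqueness of the root of $\Psi$ first gives almost sure consistency of $\hat\beta_n$; localizing near $\beta_0$ and using the local nondegeneracy of $\Psi$ there — the same nonsingularity of $\dot\Psi_\beta(\beta_0)$ invoked in Proposition \ref{prop:beta_consis1} — supplies a lower bound $|\Psi(\beta)|\geq c_0|\beta-\beta_0|$ for $\beta$ near $\beta_0$. Since $\hat\beta_n$ satisfies $\Psi_n(\hat\beta_n)=o(n^{-1/2})$, the uniform approximation gives $c_0|\hat\beta_n-\beta_0|\leq|\Psi(\hat\beta_n)|\leq|\Psi_n(\hat\beta_n)|+o(n^{-1/2+\varepsilon})=o(n^{-1/2+\varepsilon})$ almost surely, which is the claim. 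The main obstacle is precisely the balancing act between the first and third steps: $b_n$ must grow fast enough that truncation is eventually exact, yet slowly enough that $n^{2\varepsilon}/b_n^2$ still diverges polynomially so the exponential-inequality-plus-Borel--Cantelli scheme survives. Calibrating this growth, rather than any isolated estimate, is the heart of removing the bounded-covariate assumption of \citet{Ying1993}.
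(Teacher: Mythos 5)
Your proposal is correct in outline, but it takes a genuinely different --- and much longer --- route than the paper. The paper's proof is a pure reduction to Theorem 5 of \citet{Ying1993}: Ying does not actually assume bounded covariates, only his Condition 1, namely $\max_{i\leq n}|X_i| = o(n^{\varepsilon})$ almost surely for every $\varepsilon>0$. The only real work in the paper is the same Borel--Cantelli computation that you perform in your truncation step: the tail bound (\ref{eq:tailX}) gives $P\left(\max_{i\leq n}|X_i|>n^{\varepsilon}t\right)\leq nM(n^{\varepsilon}t)^{\theta}\exp\{-\eta(n^{\varepsilon}t)^{\gamma}\}$, which is summable in $n$, so Ying's Condition 1 holds; the paper's Conditions 2--4 imply Ying's Conditions 2--4, Gehan weights satisfy his Condition 5 and his equation (4.7), and the conclusion is exactly his equation (4.8) in his Theorem 5. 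Your reconstruction --- polylogarithmic truncation at $b_n=(c\log n)^{1/\gamma}$, Hoeffding-type exponential bounds for the truncated Gehan U-statistic, a grid-plus-monotonicity argument for uniformity, and a local linearization converting the process rate into the estimator rate --- is essentially a self-contained re-derivation of what Ying's theorem already packages, and it would work; indeed Ying's own proof allows covariates growing like $n^{\varepsilon}$, so your polylog truncation is if anything more conservative than necessary. What your route buys is transparency (one sees exactly where the tail condition enters and why slow truncation suffices); what it costs is length and two loose ends you should tighten. First, after truncation the exponential inequality centers the U-statistic at its own expectation, not at $\Psi(\beta)$, so you still need the bias bound $|E\Psi_n^{\mathrm{trunc}}(\beta)-\Psi(\beta)| = o(n^{-1/2+\varepsilon})$; this follows from Cauchy--Schwartz and (\ref{eq:tailX}), since the bias is of order $P(|X|>b_n)^{1/2}=O(n^{-\eta c/2})$ up to polylogarithmic factors, but it is a genuine step your sketch skips. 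Second, your final step invokes uniqueness of the root of $\Psi$ and nonsingularity of $\dot{\Psi}_\beta(\beta_0)$, which are hypotheses of Proposition \ref{prop:beta_consis1} rather than of Proposition \ref{prop:beta_consis2} as stated; the paper's citation of Ying's Theorem 5 carries analogous nondegeneracy requirements implicitly, so this is a shared looseness rather than an error on your part, but you should list it explicitly as an assumption instead of borrowing it silently.
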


The exponential tail probability bound (\ref{eq:tailX}) implies Condition 1 in \citet{Ying1993}, which is $\max_{i\leq n}|X_i| = o(n^{\varepsilon})$ almost surely for every $\varepsilon > 0$. This is because for every $t>0$, we have
\begin{eqnarray*}
P\left(\max_{i\leq n}|X_i| > t\right) &=& 1-P\left(\max_{i\leq n}|X_i| \leq t\right) \\
& =& 1-[1-P(|X|>t)]^n  \\
&\leq & 1-[1-Mt^{\theta}\exp(-\eta t^{\gamma})]^n \\
&\leq & nMt^{\theta}\exp(-\eta t^{\gamma}),
\end{eqnarray*}
where the last inequality holds due to the fact that $(1-s)^n \geq 1-ns$ for $0\leq s \leq 1$. Therefore, for every fixed $t>0$ and $\varepsilon>0$,
\begin{eqnarray*}
 \sum_{n=1}^{\infty} P\left(\max_{i\leq n}|X_i| > n^{\varepsilon}t \right) \leq  \sum_{n=1}^{\infty} n M(n^{\varepsilon}t)^{\theta}\exp\{-\eta(n^{\varepsilon}t)^{\gamma}\} < \infty.
\end{eqnarray*}
Then by the Borel-Cantelli lemma,
$P\left(\lim_{n\rightarrow \infty} n^{-\varepsilon}\max_{i\leq n}|X_i| = 0 \right) = 1,$ i.e., $\max_{i \leq n}|X_i| = o(n^{\varepsilon})$ almost surely. As we mentioned earlier, our Conditions 2-4 imply Conditions 2-4 in \citet{Ying1993}. Furthermore, \citet{Ying1993} pointed out that Gehan weights satisfy his Condition 5 and his equation (4.7). Hence the conclusion in Proposition \ref{prop:beta_consis2} follows directly from his equation (4.8) in his Theorem 5. The detailed argument is thus omitted. The exponential tail probability condition holds for many commonly used distributions, for example, normal, weibull, and extreme value distributions.

\section{Simulations}\label{sec:Simu}
\subsection{Intercept estimation}
We conduct extensive simulations to investigate the finite sample performance of the intercept estimation under different scenarios. Failure times are generated from the following model
\begin{equation*}
T = 2 + X_1 + X_2 + \zeta.
\end{equation*}
This is a submodel of \citet{Jin2006, ZengLin2007, DingNan2011} in their simulations. Five different error distributions are considered, which are (a) $\zeta \sim
N(0,0.5^2)$; (b) $\zeta \sim Gumbel(-0.5\gamma, 0.5)$ that has mean zero, where $\gamma$ is the Euler constant; (c) $\zeta \sim Laplace(0,0.5)$; (d) $\zeta \sim Logistic(0,0.5)$; and (e) $\zeta \sim t(0,df=30)$. In each scenario, $X_1$ is Bernoulli with $p=0.5$ and $X_2$ is continuous. We investigate three different distributions of $X_2$: (1) $X_2\sim N(0,1)$; (2) $X_2\sim U(-2,2)$; and (3) $X_2\sim U(-0.5,0)$. The censoring distribution is $C\sim U(0,5) \wedge \tau$, here $\tau$ is a truncation time that reflects the length of follow-up time. We choose $\tau=1.5$ and $\tau=4$ to yield censoring rate ranges $(76\%, 88\%)$ and $(45\%, 52\%)$, respectively. We simulate 1000 runs for each setting, and report the simulation results in Table \ref{table:tab2.1} for two different sample sizes: 100 and 400.

\begin{table}
\caption{Summary of the simulation statistics. The empirical mean (standard deviation) for both the intercept and slope parameters are provided. (a) $\zeta \sim N(0,0.5^2)$; (b) $\zeta \sim Gumbel(-0.5\gamma, 0.5)$; (c) $\zeta \sim Laplace(0,0.5)$; (d) $\zeta \sim Logistic(0,0.5)$; and (e) $\zeta \sim T(0,df=30)$. \dag: $\tau=1.5$ and \ddag: $\tau=4$.}
\label{table:tab2.1}
\begin{center}
\begin{tabular}{lccccccc}
\hline
\hline
Err. & Cen. &  \multicolumn{3}{c}{n = 100 } &  \multicolumn{3}{c}{n = 400 } \\
\cline{3-8}
dist & rate &              $\alpha$  &          $\beta_1$ &         $\beta_2$ &       $\alpha$  &    $\beta_1$ &         $\beta_2$ \\
\hline
\\
\multicolumn{3}{l}{$X_2 \sim N(0,1)$:} & \smallskip \\

(a) & $.83^{\dag}$ 	 & 2.00 (.22) & 1.02 (.27) & 1.01 (.18)	 & 2.00 (.10) & 1.00 (.12) & 1.00 (.08) \\

    & $.51^{\ddag}$  & 2.00 (.09) &	1.00 (.14) & 1.00 (.08)  & 2.00 (.04) &	1.00 (.07) & 1.00 (.04) \smallskip \\

(b) & $.82^{\dag}$ 	&  1.96 (.19) & 1.02 (.24) & 1.01 (.15)  & 1.98 (.10) & 1.00 (.10) & 1.00 (.07) \\

    & $.51^{\ddag}$ &  2.00 (.11) &	1.01 (.14) & 1.00 (.08)  & 2.00 (.05) &	1.00 (.07) & 1.00 (.04) \smallskip  \\

(c) & $.82^{\dag}$	&  1.99 (.28) &	1.04 (.39) & 1.02 (.25)  & 1.99 (.13) &	1.00 (.18) & 1.00 (.11) \\

    & $.51^{\ddag}$ &  2.00 (.12) &	1.00 (.17) & 1.00 (.09)  & 2.00 (.06) &	1.00 (.08) & 1.00 (.04)  \smallskip \\

(d) & $.80^{\dag}$  &  1.97 (.30) &	1.04 (.40) & 1.02 (.26)  & 1.97 (.14) &	1.01 (.18) & 0.99 (.11) \\

    & $.51^{\ddag}$ &  1.99 (.16) &	1.00 (.23) & 1.00 (.12)  & 2.00 (.07) &	1.00 (.11) & 1.00 (.06) \smallskip  \\

(e) & $.78^{\dag}$  &  1.95 (.30) &	1.03 (.39) & 1.03 (.23)  & 1.96 (.15) &	1.02 (.19) & 1.00 (.11) \\

    & $.51^{\ddag}$ &  1.99 (.11) &	1.00 (.26) & 1.01 (.14)  & 1.99 (.05) &	1.00 (.13) & 1.00 (.07) \smallskip \\

\multicolumn{3}{l}{$X_2 \sim U(-2,2)$:} & \smallskip \\

(a) & $.79^{\dag}$  & 2.03 (.24)  &	1.01 (.24) & 1.02 (.18)  & 1.99 (.10) & 1.00 (.11) & 1.00 (.08) \\

    & $.52^{\ddag}$ & 2.01 (.09)  &	1.00 (.14) & 1.00 (.07)  & 2.00 (.04) & 1.00 (.07) & 1.00 (.03) \smallskip  \\

(b) & $.78^{\dag}$  & 1.98 (.21)  &	1.01 (.21) & 1.02 (.16)  & 1.98 (.10) & 0.99 (.10) & 0.99 (.07)\\

    & $.51^{\ddag}$ & 2.00 (.11)  &	1.00 (.14) & 1.00 (.07)  & 2.00 (.05) & 1.00 (.07) & 1.00 (.03) \smallskip \\

(c) & $.78^{\dag}$ 	& 2.02 (.30)  &	1.02 (.32) & 1.04 (.25)  & 1.99 (.14) &	1.00 (.15) & 1.00 (.11) \\

    & $.51^{\ddag}$ & 2.00 (.12)  &	1.00 (.17) & 1.00 (.09)  & 2.00 (.06) &	1.00 (.08) & 1.00 (.04) \smallskip \\

(d) & $.77^{\dag}$  & 1.99 (.30)  &	1.02 (.36) & 1.03 (.25)  & 1.98 (.15) &	1.00 (.17) & 1.00 (.11) \\

    & $.51^{\ddag}$ & 2.00 (.16)  &	1.00 (.23) & 1.00 (.11)  & 2.00 (.08) &	1.00 (.11) & 1.00 (.05) \smallskip \\

(e) & $.76^{\dag}$ 	& 1.96 (.29)  &	1.02 (.36) & 1.03 (.23)  & 1.95 (.14) &	1.01 (.18) & 1.00 (.11) \\

    & $.52^{\ddag}$ & 1.99 (.18)  &	1.00 (.27) & 1.00 (.13)  & 2.00 (.09) &	1.00 (.13) & 1.00 (.06) \smallskip \\

\multicolumn{3}{l}{$X_2 \sim U(-0.5,0)$:} & \smallskip \\

(a) & $.88^{\dag}$  &  1.81 (.28) & 0.74 (.28) & 1.06 (.74)  & 1.80 (.28) & 1.11 (.29) & 1.00 (.34) \\

    & $.45^{\ddag}$ &  2.00 (.14) & 1.00 (.13) & 0.99 (.44)  & 2.00 (.07) & 1.00 (.06) & 1.00 (.22) \smallskip \\

(b) & $.85^{\dag}$  &  1.77 (.21) & 0.78 (.36) & 1.07 (.58)  & 1.75 (.11) & 1.20 (.42) & 1.01 (.29) \\

    & $.45^{\ddag}$ &  1.99 (.16) & 1.00 (.13) & 1.00 (.48)  & 2.00 (.07) & 1.00 (.06) & 1.00 (.22) \smallskip \\

(c) & $.86^{\dag}$  &  1.80 (.40) & 0.95 (.40) & 1.10 (1.18) & 1.75 (.21) & 1.06 (.30) & 1.01 (.52) \\

    & $.45^{\ddag}$ &  1.99 (.18) & 1.00 (.15) & 0.99 (.56)  & 2.00 (.08) & 1.00 (.06) & 1.00 (.26) \smallskip \\

(d) & $.81^{\dag}$  &  1.68 (.42) & 1.04 (.39) & 1.04 (1.18) & 1.66 (.19) & 1.02 (.21) & 1.01 (.51) \\

    & $.46^{\ddag}$ &  1.99 (.24) & 1.00 (.21) & 0.99 (.76)  & 2.00 (.11) & 1.00 (.11) & 1.00 (.35) \smallskip \\

(e) & $.79^{\dag}$  &  1.61 (.40) & 1.03 (.38) & 1.03 (1.15) & 1.59 (.20) & 1.01 (.18) & 1.01 (.52) \\

    & $.46^{\ddag}$ &  1.98 (.28) & 1.00 (.25) & 0.98 (.87)  & 2.00 (.14) & 0.99 (.13) & 1.01 (.42) \\
\hline
\end{tabular}
\end{center}
\end{table}

The first covariate setting corresponds to the unbounded covariate support. It is clearly seen that the bias of the intercept estimator is minimal even with the shorter truncation time $\tau=1.5$ for all error distributions. The bias is also very small in the second covariate setting, where the support of $X_2$ is bounded, but wide. The bias becomes noticeable when the support of $X_2$ gets narrower in the third setting with truncation time $\tau=1.5$. With the longer truncation time $\tau=4$,  which is close to the setting of \citet{LaiYing1991} who assumed wider censoring time support, the bias of the intercept estimator is negligible for all error distributions and covariate supports. The bias for the slope estimators is minimal for most of the simulation settings except for the binary covariate $X_1$ under the third setting ($X_2 \sim U(-0.5,0)$) with Normal and Gumbel errors when the follow-up time is short. This is possibly because when the censoring rate is very high ($\ge 85\%$), the probability to observe a non-zero value of $X_1$ under the uncensored case is very small (about $7.5\%$ or less), therefore the estimation for $\beta_1$ did not perform well in this case.

For the shorter follow-up setting with $\tau=1.5$, Figure \ref{Figure:Fig2.1} displays the Kaplan-Meier curves of the estimated residual survival time $T_i-\hat{\beta}_{n,1}X_{i,1}-\hat{\beta}_{n,2}X_{i,2}$ under five error distributions, where each curve is obtained from a sample with size $n=400$. From left to right, the three panels correspond to $X_2 \sim N(0,1)$, $X_2 \sim U(-2,2)$, and $X_2 \sim U(-0.5,0)$, respectively. It is clearly seen that whenever the survival curve is close to zero at the right tail, the corresponding intercept estimator in Table \ref{table:tab2.1} has minimal bias. We notice from our intensive simulations that a satisfactory intercept estimator (bias $< 5\%$) can be obtained when the right tail of the Kaplan-Meier curve goes below 0.15. This provides a practical rule of thumb for getting a sense of adequacy of the intercept estimation.

\begin{figure}
\begin{center}
\includegraphics[scale=0.7]{./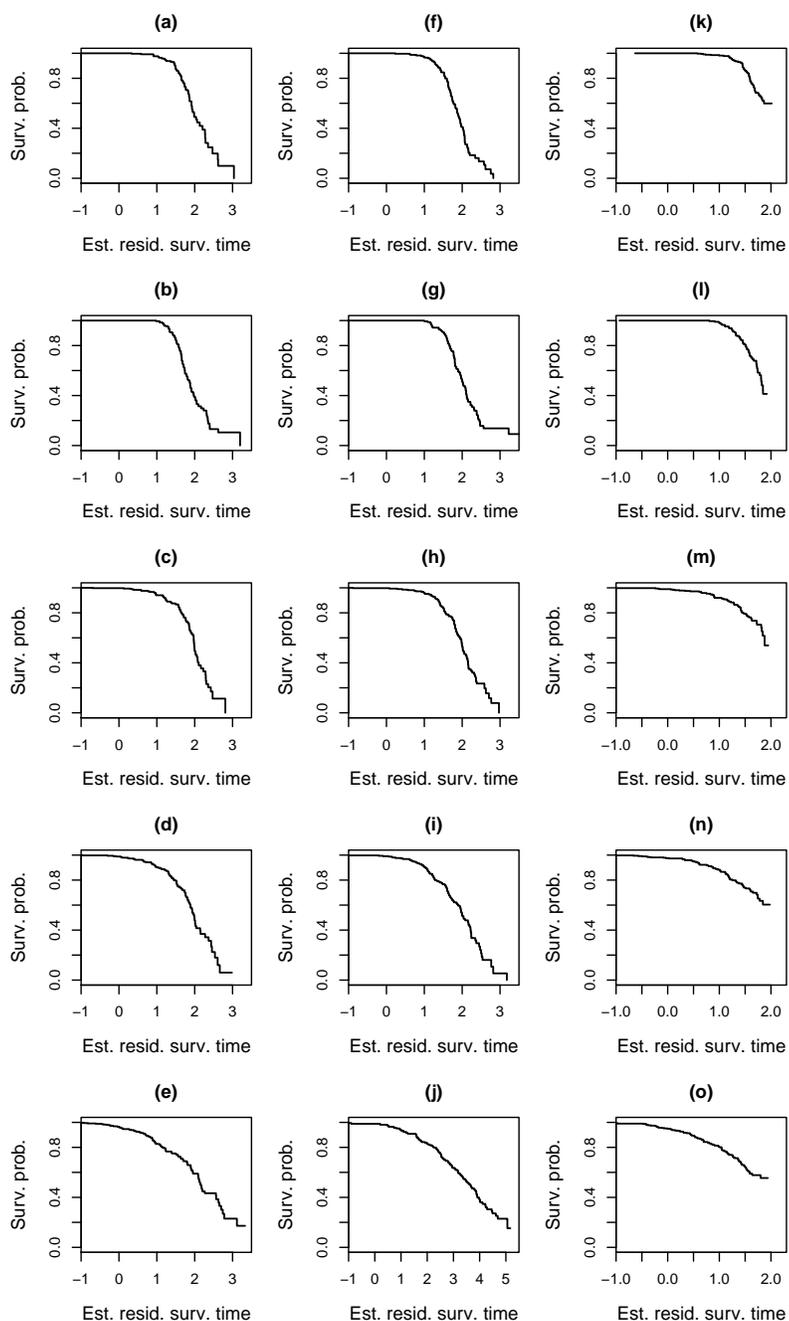}
\caption{K-M plots of the estimated residual survival time ($T-\hat{\beta}_{n,1}X_1-\hat{\beta}_{n,2}X_2$) under $\tau=1.5$. Each column corresponds to one of the five error distributions given in Table \ref{table:tab2.1}. (a)-(e): $X_2\sim N(0,1)$; (f)-(j): $X_2\sim U(-2,2)$; (k)-(o): $X_2\sim U(-0.5,0)$.} \label{Figure:Fig2.1}
\end{center}
\end{figure}

\subsection{Failure time prediction}

We also compare the survival time prediction accuracy of the linear model with the Cox model via simulations. In order to have a fair comparison, we generate data from the following model:
\begin{equation} \label{model:AFTvsCox}
T = X + e_0,
\end{equation}
where $e_0$ follows the standard extreme value distribution with $F(t) = 1-e^{-e^t}$. It is well known that such a model setting fits both the linear regression model and the Cox model. In model (\ref{model:AFTvsCox}), we have $\alpha_0 = E e_0 = -\gamma$ and $\beta_1=1$, where $\gamma$ is the Euler constant. Note that we only use a single covariate for illustrative purpose.

We generate censoring time from $C \sim U(-3,3)\wedge \tau$, where $\tau$ is a fixed truncation time taking different values in order to generate different censoring rates. As in the first simulation study, covariate $X$ is generated from three distributions: $N(0,1)$, $U(-2,2)$ and $U(-1,1)$ to represent three scenarios of the covariate support (unbounded; bounded but wide; bounded but narrow). For each simulation setting, two independent data sets of equal size are generated, namely the training set and the test set, at each simulation run. Both the linear model and the Cox model are fitted using the training set, and survival times are predicted for the test set using the fitted models.

For the linear model, the predicted survival time for subject $i$ in the test set with covariate $X_i^*$ is calculated as $\hat{T}_i^{LR}=\hat{E}(T_i|X_i^*)=\hat{\alpha}_n+\hat{\beta}_n^{LR} X_i^*$,
where $\hat{\beta}_n^{LR}$ is solved by the Gehan-weighted rank based estimating equation and $\hat{\alpha}_n$ is estimated from (\ref{eq:alpha2}). For the Cox model, the predicted survival time is calculated by $$\hat{T}_i^{Cox} = \int t \ d\left[1-\exp \left\{-\hat{\Lambda}_{0,n}(t)e^{\hat{\beta}_n^{Cox}X_i^*}\right\}\right],$$ where $\hat{\Lambda}_{0,n}(t)$ is the Breslow estimator of the baseline cumulative hazard function $\Lambda_0(t)$, whereas $\hat{\beta}_n^{Cox}$ is the partial likelihood estimator. We use the following measure to determine the prediction accuracy:
\begin{equation} \label{eq:MSE}
MSE_p = \frac{1}{n}\sum_{i=1}^{n}(T_i^*-\hat{T}_i)^2,
\end{equation}
where $\hat{T}_i$ is either $\hat{T}_i^{LR}$ or $\hat{T}_i^{Cox}$ depending on which model is used and $T_i^*$ is the true survival time for the $i$th subject in the test set. Two sample sizes are considered:  $n=200$ and $n=2000$, and 1000 runs are conducted for each simulation setting. The results are summarized in Table \ref{table:tab2.2}. For each scenario, we calculate the relative prediction accuracy to the case without censoring, i.e., the ratio of the empirical mean $MSE_p$ under no censoring to that under each corresponding censored case, in addition to  reporting the empirical mean of $MSE_p$ (given in parentheses). Note that $\tau \geq 3$ implies no truncation. The $MSE_p$ obtained from ordinary least squares (OLS) is also listed for each no-censoring scenario.

\begin{table}
\caption{Comparison of prediction accuracy.  Relative prediction accuracy to the case without censoring is listed. Empirical mean of $MSE_p$ is given in parentheses. $MSE_p$ obtained from ordinary least squares (OLS) is also listed. (a): $X \sim N(0,1)$; (b): $X \sim U(-2,2)$; (c): $X \sim U(-1,1)$.}
\label{table:tab2.2}
\begin{center}
\begin{tabular}{ccccccc}
\hline
\hline
           &            &     Cen. &                  \multicolumn{ 4}{c}{Sample Size} \\

         $X$ &    $\tau$ &       rate & \multicolumn{ 2}{c}{n = 200} & \multicolumn{ 2}{c}{n = 2000} \\
\cline{4-7}
           &            &            &     Linear &        Cox &     Linear &        Cox \\
\hline
\\
(a)        &         -2 &        0.86 & 0.86 (1.95) & 0.33 (5.08) & 0.98 (1.68) & 0.33 (4.98) \\

           &         -1 &        0.72 & 0.97 (1.72) & 0.58 (2.90) & 0.99 (1.66) & 0.58 (2.86) \\

           &          0 &        0.55 & 0.99 (1.69) & 0.84 (2.00) & 1.00 (1.65) & 0.84 (1.96) \\

           &          1 &        0.44 & 1.00 (1.67) & 0.97 (1.72) & 1.00 (1.64) & 0.97 (1.70) \\

           &         $ \geq 3$ &        0.00 & 1.00 (1.67) & 1.00 (1.67)  & 1.00 (1.65) & 1.00 (1.65)  \\

           & \multicolumn{ 2}{c}{OLS} & \multicolumn{ 2}{c}{(1.67)} & \multicolumn{ 2}{c}{(1.65)} \smallskip \\

(b) 			 &         -2 &        0.82 & 0.85 (1.93) & 0.31 (5.38) & 0.96 (1.71) & 0.31 (5.28) \\

           &         -1 &        0.67 & 0.96 (1.71) & 0.53 (3.12) & 1.00 (1.65) & 0.54 (3.08) \\

           &          0 &        0.54 & 0.99 (1.67) & 0.80 (2.07) & 1.00 (1.65) & 0.80 (2.05) \\

           &          1 &        0.46 & 0.99 (1.66) & 0.96 (1.72) & 1.00 (1.65) & 0.96 (1.71) \\

           &         $ \geq 3$ &        0.00 & 1.00 (1.65) & 1.00 (1.65)  & 1.00 (1.65) & 1.00 (1.65)  \\

           & \multicolumn{ 2}{c}{OLS} & \multicolumn{ 2}{c}{(1.65) } & \multicolumn{ 2}{c}{(1.65)} \smallskip \\

(c)        &         -2 &       0.86 & 0.68 (2.41) & 0.37 (4.51) & 0.74 (2.24) & 0.38 (4.38) \\

           &         -1 &       0.72 & 0.94 (1.75) & 0.67 (2.47) & 0.97 (1.70) & 0.67 (2.45) \\

           &          0 &       0.55 & 0.99 (1.66) & 0.93 (1.78) & 1.00 (1.65) & 0.93 (1.77) \\

           &          1 &       0.44 & 1.00 (1.65) & 1.00 (1.65) & 1.00 (1.65) & 1.00 (1.65) \\

           &         $ \geq 3$ &       0.00 & 1.00 (1.65) & 1.00 (1.65) & 1.00 (1.65) & 1.00 (1.65) \\

           & \multicolumn{ 2}{c}{OLS} & \multicolumn{ 2}{c}{(1.65)} & \multicolumn{ 2}{c}{(1.65)} \\

\hline
\end{tabular}
\end{center}
\end{table}

From Table \ref{table:tab2.2} we see that the linear model is much less sensitive to the truncation time, especially for wide covariate support, e.g., $X \sim N(0,1)$ and $X \sim U(-2,2)$, where the linear model yields almost perfect prediction error regardless of truncation time. The linear model performs uniformly better than the Cox model. The Cox model does extremely poorly in cases with heavy truncation. This is not surprising because the baseline hazard function in the Cox model is not estimable after the last observation time ($\leq \tau$) in the training set. The convention is to set the failure time distribution function to be 1 after that time point. This introduces bias when predicting the survival time and obviously, the bias becomes more severe when the follow-up time is shorter. The difference between the two models is clearly seen from Figure \ref{Figure:Fig2.3}. The two models perform equally well when there is no censoring (see Table \ref{table:tab2.2}).

\begin{figure}
\begin{center}
\includegraphics[scale=0.8]{./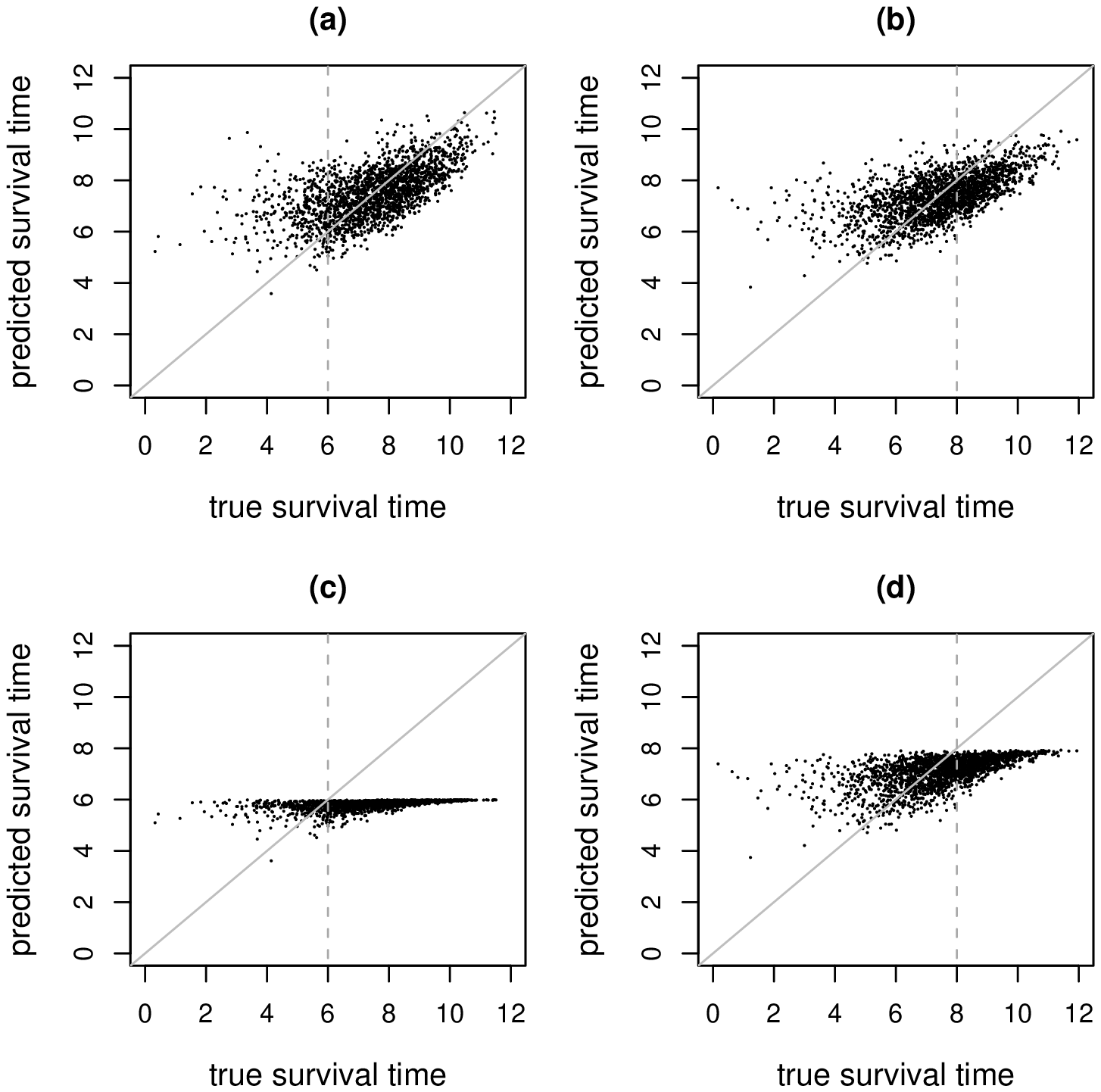}
\caption{The predicted survival time versus the true survival time for data generated from model (\ref{model:AFTvsCox}) with $X\sim N(0,1)$ and $n=2000$. (a): linear model with $\tau=-2$; (b): linear model with $\tau=0$; (c): Cox model with $\tau=-2$; and (d): Cox model with $\tau=0$. Constant $8$ was added to shift all the simulated survival times to positive values.} \label{Figure:Fig2.3}
\end{center}
\end{figure}

\section{A real data example} \label{sec:Exp}

We consider the well-known Mayo primary biliary cirrhosis (PBC) study as an illustrative example \citep[app. D.1]{FlemingHarrington}. The data contain information about the survival time and prognostic factors for 418 patients. \citet{Jin2003} and \citet{Jin2006} fitted the accelerated failure time model with five covariates, namely age, log(albumin), log(bilirubin), edema, and log(protime). They estimated slope parameters for those covariates using rank-based and least squares methods. We consider the same model. The slope parameters are estimated by Gehan weighted rank based approach and the intercept estimator is obtained by (\ref{eq:alpha2}). The estimated coefficients for the five prognostic factors are $(-0.025, 1.498, -0.554, -0.904, -2.822)$ with estimated standard errors  $(0.005, 0.479, 0.052, 0.234, 0.923)$, which are similar to those reported in \citet{Jin2003}. The intercept estimator is 8.692. The right tail of the Kaplan-Meier curve of the residual survival time almost touches zero (see Figure \ref{Figure:Fig2.4}a), which indicates a valid intercept estimation from  (\ref{eq:alpha2}) for the PBC data.

We then perform the leave-one-out cross-validation to check the prediction performance of the model.
Figure \ref{Figure:Fig2.4}b shows the predicted survival time against the observed time in the logarithm scale.
The circles correspond to the patients who failed and the triangles correspond to the patients who were censored.
The figure suggests that the accelerated failure time model provides a reasonable prediction of the survival time for this dataset with most of the censored subjects having predicted survival times above the 45-degree line, except for a few subjects who might be outliers. For example, subject 87 (circled in Figure \ref{Figure:Fig2.4}b) was a 37 year old woman with quite good prognostic status: no edema, good albumin (4.4), low bilirubin (1.1) and moderate protime (10.7). Yet she survived for no longer than roughly half a year. Subject 293, on the other hand, was a 57 year old woman with poor prognostic status. In spite of low albumin (2.98), high bilirubin (8.5) and protime (12.3), and edema resistent to diuretics, she remains alive after more than 3.5 years. This same subject was also detected as an outlier in the residual plot for the covariate edema from a Cox model for the same data \citep[p. 184]{FlemingHarrington}.

\begin{figure}
\begin{center}
\includegraphics[scale=0.6]{./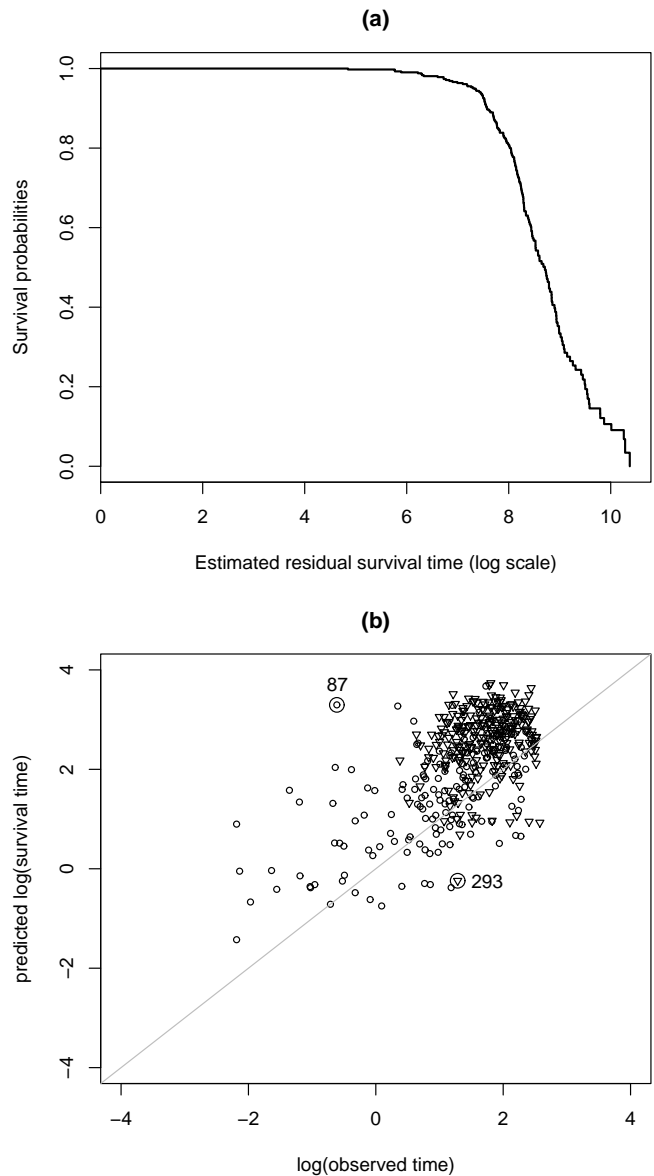}
\caption{(a): K-M plot of the estimated residual survival time for the PBC data. (b): Predicted survival time versus the observed time points (in the logarithm scale) for the PBC data. Circle: individual who failed; triangle: individual who was censored. Subject 87 and 293 are two potential outliers.} \label{Figure:Fig2.4}
\end{center}
\end{figure}

\section{Concluding remarks} \label{sec:Con}
In practice, the mean survival time can be well estimated when the follow-up time is long or the covariate range is wide (even with a short follow-up time). The first situation corresponds to the well-known technical condition that the support of the censoring time contains the support of the survival time and the second situation corresponds to the technical condition of unbounded covariate support that we have established in this article.

Model checking is very important in data analysis. For the linear model, one can follow the method developed for the Cox model by visualizing the cumulative sums of the martingale-based residuals to assess how unusual the observed residual patterns would be, see e.g. \citet{LinRobinsWei1996, LinWei1993}.

Bias and variance trade-off plays an important role in assessing prediction errors, which requires knowing the asymptotic joint distribution of both the intercept and slope parameter estimators. We do not pursue it here. We also want to point out that any prediction beyond the follow-up time needs to be interpreted cautiously because it lacks empirical verification without obtaining new data with extended follow-up.

The asymptotic distribution of the intercept estimator is still unknown. However, a trimmed mean can be estimated at ${n}^{1/2}$-rate with an asymptotically normal distribution. We refer to \citet{Ding} for details.
%

\section{Proofs of technical results}\label{sec:Proofs}
We prove Theorems \ref{thm:consis1}-\ref{thm:consis2} in this section. Firstly, we provide several lemmas that will be used in the proofs.

\subsection{Technical lemmas}
\begin{lem} \label{lem:lem1}
For every $\varepsilon >0$, with probability 1 we have
\begin{equation*}
\sup_{\beta \in \mathcal{B}, -\infty<s<\infty}n^{1/2}|H_n^{(k)}(\beta,s)-h^{(k)}(\beta,s)|
= o(n^{\varepsilon}),
\end{equation*}
where $H_n^{(k)}(\beta,s)$ and $h^{(k)}(\beta,s)$, $k=0,1$, are defined in (\ref{eq:proc1}) and (\ref{eq:proc2}) respectively.
\end{lem}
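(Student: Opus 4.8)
The plan is to recognize $n^{1/2}(H_n^{(k)}-h^{(k)})$ as the empirical process $\mathbb{G}_n$ evaluated at a suitable indicator function, to verify that the indexing family is a Vapnik--Chervonenkis (VC) class, and then to upgrade the resulting stochastic boundedness of the supremum into the almost-sure rate $o(n^{\varepsilon})$ by combining a VC maximal inequality, a bounded-difference concentration estimate, and the Borel--Cantelli lemma.

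First I would set up the function classes. For $k=1$, write $n^{1/2}(H_n^{(1)}(\beta,s)-h^{(1)}(\beta,s))=\mathbb{G}_n f_{\beta,s}$ with $f_{\beta,s}(Y,X)=1(Y-\beta X\geq s)$, and for $k=0$ use $g_{\beta,s}(Y,X,\Delta)=1(Y-\beta X\leq s,\,\Delta=1)$, so that the quantity in the lemma is $\|\mathbb{G}_n\|_{\mathcal{F}_k}:=\sup_{f\in\mathcal{F}_k}|\mathbb{G}_n f|$ with $\mathcal{F}_1=\{f_{\beta,s}:\beta\in\mathcal{B},\,s\in\mathbb{R}\}$ and $\mathcal{F}_0=\{g_{\beta,s}:\beta\in\mathcal{B},\,s\in\mathbb{R}\}$. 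The key structural observation is that each set $\{(y,x):y-\beta x\geq s\}$ is a (non-vertical) half-plane in $\mathbb{R}^2$; since the collection of all half-planes in $\mathbb{R}^2$ has finite VC index, the families $\mathcal{F}_1$ and $\mathcal{F}_0$ are VC-subgraph classes with envelope $1$. For $\mathcal{F}_0$, intersecting the half-plane with the fixed set $\{\Delta=1\}$ preserves the VC property, and restricting $\beta$ to the arbitrary subset $\mathcal{B}\subseteq\mathbb{R}$ only passes to a subclass, so nothing is lost by letting $s$ and $\beta$ range over unbounded sets.

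With the VC property in hand, the standard entropy bound for VC classes yields a uniform polynomial covering-number estimate $N(\epsilon,\mathcal{F}_k,L_2(Q))\leq C\epsilon^{-r}$ valid for every probability measure $Q$, with $C,r$ depending only on the VC index; hence Dudley's entropy integral is finite and gives $E\|\mathbb{G}_n\|_{\mathcal{F}_k}\leq K$ for a finite constant $K$ independent of $n$. Next I would apply McDiarmid's bounded-difference inequality: changing a single observation alters $\|\mathbb{G}_n\|_{\mathcal{F}_k}$ by at most $n^{-1/2}$ (the indicators lie in $[0,1]$), so $P(\|\mathbb{G}_n\|_{\mathcal{F}_k}>K+t)\leq\exp(-2t^2)$. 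Fixing $\varepsilon>0$ and any $\delta>0$ and taking $t=\delta n^{\varepsilon}-K$, which is positive for all large $n$, gives $P(\|\mathbb{G}_n\|_{\mathcal{F}_k}>\delta n^{\varepsilon})\leq\exp(-c\,n^{2\varepsilon})$ for some $c>0$, a summable sequence in $n$. By Borel--Cantelli the event $\{\|\mathbb{G}_n\|_{\mathcal{F}_k}>\delta n^{\varepsilon}\}$ occurs only finitely often almost surely, and since $\delta$ is arbitrary this yields $n^{-\varepsilon}\|\mathbb{G}_n\|_{\mathcal{F}_k}\to 0$ almost surely, i.e. the claimed $o(n^{\varepsilon})$ rate, for both $k=0$ and $k=1$.

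The main obstacle is the first step: arguing cleanly that the doubly-indexed indicator classes are VC \emph{uniformly} over the unbounded parameter ranges $\beta\in\mathcal{B}$ and $s\in\mathbb{R}$, which is exactly where the half-plane representation does the work. Everything afterward is a routine assembly of the VC maximal inequality, bounded-difference concentration, and Borel--Cantelli; the only point requiring care there is that the expectation bound $E\|\mathbb{G}_n\|_{\mathcal{F}_k}$ be genuinely uniform in $n$ (supplied by the distribution-free covering-number bound), since it is this uniformity that makes the concentration tails summable and thus the almost-sure conclusion valid.
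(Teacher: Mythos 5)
Your proof is correct, and its skeleton matches the paper's: identify the doubly-indexed indicator families as VC classes (half-spaces in $(y,x)$, with the $\Delta=1$ restriction harmless), obtain a sub-Gaussian tail bound for the supremum of the empirical process that is uniform in $n$, and convert it into the almost-sure $o(n^{\varepsilon})$ rate via Borel--Cantelli. The one place you genuinely diverge is the middle step: the paper invokes the packaged inequality of \citet{vanWellner1996} (their Theorems 2.6.7 and 2.14.9) to get $P\bigl(\|\mathbb{G}_n\|_{\mathcal{F}_k}>t\bigr)\leq Mt^{V}e^{-2t^2}$ in one stroke, whereas you assemble an equivalent bound from two more elementary ingredients: the distribution-free VC covering-number bound feeding Dudley's entropy integral, giving $E\|\mathbb{G}_n\|_{\mathcal{F}_k}\leq K$ uniformly in $n$, followed by McDiarmid's bounded-difference inequality (with increments $n^{-1/2}$, hence $\sum_i c_i^2=1$), giving $P\bigl(\|\mathbb{G}_n\|_{\mathcal{F}_k}>K+t\bigr)\leq e^{-2t^2}$. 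Your version is more modular and self-contained --- it avoids the refined maximal inequality and, incidentally, the paper's unnecessary detour through convex hulls and the Donsker property, which plays no role in the tail bound --- while the paper's version is shorter on the page because the polynomial-times-Gaussian tail comes ready-made. Either tail is summable along $t=\delta n^{\varepsilon}$, so both arguments close identically with Borel--Cantelli; the only housekeeping your write-up leaves implicit is that the almost-sure events should be intersected over a countable sequence $\delta=1/m$ to pass from ``eventually below $\delta n^{\varepsilon}$ for each fixed $\delta$'' to the stated $o(n^{\varepsilon})$, which is routine.
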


\begin{proof}
We apply the empirical process theory to prove this result. Since the class of indicator functions of half spaces is a VC-class, see e.g. Exercise 9 on page 151 and Exercise 14 on page 152 in \citet{vanWellner1996}, and thus a Donsker class, the sets of functions $\mathcal{F}_0 = \{1(\epsilon_\beta \leq s, \Delta=1)\}$=$\{\Delta 1(\epsilon_\beta \leq s)\}$ and $\mathcal{F}_1 = \{1(\epsilon_\beta \geq s)\}$ are both Donsker classes. Let $\bar{\mathcal{F}}_k$ be the closure of $\mathcal{F}_k$, $k=0,1$, respectively. Then $H_n^{(k)}(\beta,s)$ and $h^{(k)}(\beta,s)$ are in the convex hull of $\bar{\mathcal{F}}_k$, $k=0,1$, and thus belong to Donsker classes (see e.g. Theorems 2.10.2 and 2.10.3 in \citet{vanWellner1996}). Hence by their Theorem 2.6.7 and Theorem 2.14.9, it follows that for every $t>0$,
$$ P\biggl(\sup_{\beta \in \mathcal{B}, -\infty<s<\infty}n^{1/2}|H_n^{(k)}(\beta,s)-h^{(k)}(\beta,s)|>t\biggr) \leq Mt^Ve^{-2t^2}, $$
where $M>0$ is a constant and $V=2V(\mathcal{F})-2$ with $V(\mathcal{F})$ being the index of the VC-class $\mathcal{F}$, which is 4 in this case for one-dimensional $\beta_0$, hence $V=6$. When $\beta_0 \in \mathbb{R}^d$ for a fixed $d$, the index of the VC-class is $V(\mathcal{F})=d+3$ and the following argument still holds. Then for any $\varepsilon>0$, let
$$\displaystyle A_{n,\varepsilon}=\sup_{\beta \in \mathcal{B}, -\infty<s<\infty} n^{1/2-\varepsilon}|H_n^{(k)}(\beta,s)-h^{(k)}(\beta,s)|.$$ Since $t^V \leq e^{1.5t^2}$ for large enough $t>0$ and a fixed $V>0$, then
$$ \displaystyle \sum_{n=1}^{\infty}P(|A_{n,\varepsilon}-0|>t) \leq M\sum_{n=1}^{\infty}\exp\{-0.5(n^{\varepsilon}t)^2\} < \infty .$$
By the Borel-Cantelli lemma we have $\displaystyle P\big(\lim_{n \rightarrow \infty} A_{n,\varepsilon}=0\big)=1$. We then have obtained the desired result.
\end{proof}

\begin{lem} \label{lem:lem2}
Assume Conditions 1-3 hold, then for every $\varepsilon \geq 0 $ we have
\begin{equation*}
\sup_{|\beta-\beta'|+|s-s'|\leq n^{-\varepsilon}}|h^{(k)}(\beta,s)-h^{(k)}(\beta',s')| = O(n^{-\varepsilon}),
\end{equation*}
where $h^{(k)}(\beta,s)$, $k=0,1$ and $2$, are defined in (\ref{eq:proc1}), (\ref{eq:proc2}) and (\ref{eq:proc3}) respectively.
\end{lem}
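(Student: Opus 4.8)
The plan is to prove the stronger statement that each $h^{(k)}(\beta,s)$ is jointly Lipschitz in $(\beta,s)$ with a Lipschitz constant depending only on the bounds furnished by Conditions 1--3; the assertion with rate $n^{-\varepsilon}$ then follows at once by restricting to $|\beta-\beta'|+|s-s'|\le n^{-\varepsilon}$, including the trivial case $\varepsilon=0$. The guiding idea is to condition on $X$, so that each functional becomes the $X$-expectation of a one-dimensional function evaluated at the shifted argument $s+\beta X$, and then to show that this one-dimensional function is Lipschitz \emph{uniformly} in $X$.

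First I would rewrite the three functionals conditionally on $X$. Since $\zeta$ is independent of $(X,C)$, the failure time $T=\alpha_0+\beta_0 X+\zeta$ and the censoring time $C$ are conditionally independent given $X$. Writing $S_Y(\cdot\mid x)$, $S_T(\cdot\mid x)$, $S_C(\cdot\mid x)$ for the conditional survival functions, $Y=\min(T,C)$ gives $S_Y(y\mid x)=S_T(y\mid x)\,S_C(y\mid x)$, so that $h^{(1)}(\beta,s)=E[\,S_Y(s+\beta X\mid X)\,]$ and $h^{(2)}(\beta,s)=E[\,X\,S_Y(s+\beta X\mid X)\,]$. For $h^{(0)}$, on $\{\Delta=1\}$ one has $Y=T$, hence $h^{(0)}(\beta,s)=P(T-\beta X\le s,\ T\le C)=E[\,\psi(s+\beta X\mid X)\,]$ with $\psi(y\mid x)=\int_{-\infty}^{y}S_C(t\mid x)\,f(t-\beta_0 x)\,dt$.

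Next I would establish the uniform one-dimensional Lipschitz bounds. The conditional density of $T$ given $X=x$ is $f(\cdot-\beta_0 x)$, bounded by $\|f\|_\infty$ by Condition 2, so $S_T(\cdot\mid x)$ is Lipschitz with constant $\|f\|_\infty$; Condition 3 makes $S_C(\cdot\mid x)$ Lipschitz with constant $\sup_{x,t}g_{C\mid X}(t\mid x)$. Because each factor is bounded by $1$, the product $S_Y(\cdot\mid x)$ is Lipschitz with constant $B:=\|f\|_\infty+\sup_{x,t}g_{C\mid X}(t\mid x)$, uniformly in $x$; likewise $\partial_y\psi(y\mid x)=S_C(y\mid x)f(y-\beta_0 x)\le\|f\|_\infty$, so $\psi(\cdot\mid x)$ is Lipschitz-$\|f\|_\infty$ uniformly in $x$. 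Using $|(s+\beta x)-(s'+\beta' x)|\le|s-s'|+|x|\,|\beta-\beta'|$ and taking expectations yields, for instance, $|h^{(1)}(\beta,s)-h^{(1)}(\beta',s')|\le B\big(|s-s'|+E|X|\,|\beta-\beta'|\big)$, the analogous bound for $h^{(0)}$ with $\|f\|_\infty$ in place of $B$, and for $h^{(2)}$ the bound $B\big(E|X|\,|s-s'|+EX^2\,|\beta-\beta'|\big)$.

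I expect the only genuine point of care to be controlling the $|\beta-\beta'|$ direction under unbounded covariate support: after differentiating in $\beta$ the factor $|x|$ (and $|x|^2$ for $h^{(2)}$) cannot be bounded pointwise as in the usual bounded-covariate arguments, and must instead be absorbed by the moment conditions. This is exactly where Condition 1 enters, the finite second moment $EX^2<\infty$ being precisely what is needed for $h^{(2)}$ (and, a fortiori, $E|X|<\infty$ for $h^{(0)}$ and $h^{(1)}$). Collecting constants produces a single Lipschitz constant $C=B\max(1,E|X|,EX^2)$, and restricting to $|\beta-\beta'|+|s-s'|\le n^{-\varepsilon}$ delivers the claimed $O(n^{-\varepsilon})$ bound uniformly over $k=0,1,2$.
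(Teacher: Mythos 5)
Your proof is correct, and it takes a genuinely different route from the paper's. You condition on $X$ and stay on the \emph{original} time scale, so that $(\beta,s)$ enters each functional only through the composite argument $s+\beta x$ of a function that itself is free of $(\beta,s)$: $h^{(0)}(\beta,s)=E[\psi(s+\beta X\mid X)]$, $h^{(1)}(\beta,s)=E[S_Y(s+\beta X\mid X)]$, $h^{(2)}(\beta,s)=E[X\,S_Y(s+\beta X\mid X)]$. Uniform-in-$x$ Lipschitz continuity of $\psi(\cdot\mid x)$ and $S_Y(\cdot\mid x)$ (from the bounded densities in Conditions 2 and 3) plus $E|X|<\infty$, $EX^2<\infty$ (Condition 1) then give a single Lipschitz constant. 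The paper instead works on the residual scale, writing $h^{(0)}(\beta,s)=\int_{\mathcal{X}}\{\int_{-\infty}^{s}f(u+(\beta-\beta_0)x)\bar{G}_{C|X}(u+\beta x\mid X=x)\,du\}f_X(x)\,dx$, so that $\beta$ appears in \emph{two} places inside the integrand; applying the mean value theorem there produces a $\dot{f}$ term, which forces the paper to prove $\int|\dot{f}(u)|\,du<\infty$ via Cauchy--Schwarz using the Fisher-information-type integral in Condition 2. Your parametrization makes this machinery unnecessary: indeed, the paper's inner integrand $[\dot{f}(u+(\beta-\beta_0)x)\bar{G}_{C|X}(u+\beta x\mid x)-f(u+(\beta-\beta_0)x)g_{C|X}(u+\beta x\mid x)]x$ is exactly $x\,\partial_u\{f(u+(\beta-\beta_0)x)\bar{G}_{C|X}(u+\beta x\mid x)\}$, so its integral telescopes to precisely your boundary term $x\,f(s+(\beta-\beta_0)x)\bar{G}_{C|X}(s+\beta x\mid x)$; the paper bounds the two pieces separately in absolute value, which is what creates the extra integrability demand. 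Consequently your argument is shorter and uses strictly weaker hypotheses (only $\|f\|_\infty<\infty$ from Condition 2, no boundedness or integrability of $\dot{f}$). The only thing the paper's derivation buys in exchange is the explicit residual-scale identity $h^{(1)}(\beta,t)=\int\bar{F}(t+(\beta-\beta_0)x)\bar{G}_{C|X}(t+\beta x\mid X=x)f_X(x)\,dx$, which is reused later (to show $h^{(1)}(\beta,t)>0$ in the proof of Theorem \ref{thm:consis2}); your conditional-survival representation yields the same identity immediately, so nothing is lost.
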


\begin{proof}
Since $e_0=T-\beta_0 X$ is independent of $(X,C)$,
the joint density function of $(T,C,X)$ can then be decomposed as
$$ f_{T,C,X}(t,c,x)=f_{e_0,C,X}(t-\beta_0 x,c,x)=f(t-\beta_0 x)f_{C,X}(c,x) $$
where $f$ is the density of $e_0$. So
$$ f(t-\beta_0 x) = f_{T|C, X}(t|C=c,X=x) = f_{T|X}(t|X=x).$$
Then the joint density function of $(Y,\Delta,X)$ follows
\begin{eqnarray*}
&& \hspace{-0.2in}  f_{Y,\Delta,X}(y,\delta,x)\\
&& =f(y-\beta_0 x)^{\delta}\bar{F}(y-\beta_0 x)^{1-\delta} g_{C|X}(y|X=x)^{1-\delta}\bar{G}_{C|X}(y|X=x)^{\delta}f_X(x),
\end{eqnarray*}
where $\bar{F}(\cdot)=1-F(\cdot)$ and $\bar{G}_{C|X}(\cdot|X=x)=1-G_{C|X}(\cdot|X=x)$.

For $h^{(0)}(\beta,s)$, the joint sub-density function of $(Y,\Delta=1,X)$ can be written as
$ f_{Y,\Delta,X}(y,1,x)=f(y-\beta_0 x)\bar{G}_{C|X}(y|X=x)f_X(x)$. So
\begin{eqnarray*}
h^{(0)}(\beta,s) \!\!\!&=& \!\!\! P\{1(\epsilon_\beta \leq s, \Delta=1)\}\\
&=& \!\!\! \int_{\mathcal{X}} \biggl\{ \int_{-\infty}^s f(u+(\beta-\beta_0) x)\bar{G}_{C|X}(u+\beta x|X=x)\, du \biggr\}f_X(x)\, dx.
\end{eqnarray*}
Then for any $\beta,\beta' \in \mathcal{B}$ and $-\infty<s<\infty$, by the mean value theorem, there exists a value $\tilde{\beta}$ between $\beta$ and $\beta'$ such that
\begin{eqnarray*}
&& \hspace{-0.2in} |h^{(0)}(\beta,s)-h^{(0)}(\beta',s)| \\
&& = \biggl|\int_{\mathcal{X}}\biggl\{\int_{-\infty}^s \bigl[\dot{f}(u+(\tilde{\beta}-\beta_0)x) \bar{G}_{C|X}(u+\tilde{\beta} x|X=x) \\
&& \qquad -\ f(u+(\tilde{\beta}-\beta_0)x)g_{C|X}(u+\tilde{\beta}x|X=x)\bigr](\beta-\beta')x \ du\biggr\}f_X(x)\ dx\biggr|\\
&&  \leq  |\beta-\beta'| \int_{\mathcal{X}}\biggl\{\int_{-\infty}^s \bigl|\dot{f}(u+(\tilde{\beta}-\beta_0)x)\bar{G}_{C| X}(u+\tilde{\beta}x|X=x) \\
&& \qquad -\ f(u+(\tilde{\beta}-\beta_0)x)g_{C|X}(u+\tilde{\beta}x|X=x)\bigr| \, du \biggr\}|x|f_X(x)\, dx \\
&&  \leq C_1|\beta-\beta'| \int_{\mathcal{X}}\biggl\{\int_{-\infty}^{\infty}\{|\dot{f}(u)|+ f(u)\}\, du \biggr\}|x|f_X(x)\, dx \\
&&  \leq C_1C_2|\beta-\beta'|\int_{\mathcal{X}}|x|f_X(x)\, dx,
\end{eqnarray*}
where the second inequality holds for some finite constant $C_1\geq 1$ such that $g_{C|X}(\cdot|X=x) \leq C_1$ uniformly, which is guaranteed by Condition 3; and the third inequality holds by
Condition 2 and the following Cauchy-Schwartz inequality
\begin{eqnarray*}
\biggl\{\int_{-\infty}^{\infty} |\dot{f}(u)|\, du\biggr\}^2 &\leq& \int_{-\infty}^{\infty} \biggl(\frac{|\dot{f}(u)|}{\sqrt{f(u)}}\biggr)^2\ du \cdot \int_{-\infty}^{\infty} \bigl(\sqrt{f(u)}\bigr)^2\, du \\
&=& \biggl\{\int_{-\infty}^{\infty} \biggl(\frac{\dot{f}(u)}{f(u)}\biggr)^2f(u)\, du\biggr\} \cdot 1 <\infty
\end{eqnarray*}
such that
$$\int_{-\infty}^{\infty}\{|\dot{f}(u)|+f(u)\}\, du = \int_{-\infty}^{\infty}|\dot{f}(u)|\, du + 1 \leq C_2$$ for a constant $C_2 < \infty$. Therefore, by Condition 1 that $X$ has a finite second moment and thus a finite first moment, it follows that $$|h^{(0)}(\beta,s)-h^{(0)}(\beta',s)| \leq K_1|\beta-\beta'|$$
for a constant $K_1 < \infty$.

Moreover, for any $\beta \in \mathcal{B}$ and $-\infty<s,s'<\infty$, we have
\begin{eqnarray*}
&& \hspace{-0.2in} |h^{(0)}(\beta,s)-h^{(0)}(\beta,s')| \\
&& = \biggl|\int_{\mathcal{X}}\biggl\{\int_s^{s'}f(u+(\beta-\beta_0)x)\bar{G}_{C|X}(u+\beta x|X=x)\ du\biggr\}f_X(x)\ dx \biggr| \\
&& \leq C_3|s-s'|,
\end{eqnarray*}
where $C_3$ is a constant such that $f(\cdot)\leq C_3$, which is guaranteed by Condition 2. Hence, for any $\beta,\beta' \in \mathcal{B}$ and $-\infty<s,s'<\infty$, it follows that
$$\sup_{|\beta-\beta'|+|s-s'|\leq n^{-\varepsilon}}|h^{(0)}(\beta,s)-h^{(0)}(\beta',s')| = O(n^{-\varepsilon}).$$

For $h^{(1)}(\beta,s)$, it is easy to obtain that
\begin{equation*} \label{eq:h1_cond}
P\{1(\epsilon_\beta \geq s)|X=x\} = \bar{F}(s+(\beta-\beta_0)x)\bar{G}_{C|X}(s+\beta x|X=x).
\end{equation*}
Then for any $\beta,\beta' \in \mathcal{B}$ and $-\infty<s<\infty$, by the mean value theorem, there exists a value  $\tilde{\beta}$ between $\beta$ and $\beta'$ such that
\begin{eqnarray*}
&& \hspace{-0.2in} |h^{(1)}(\beta,s) - h^{(1)}(\beta',s)|
\\
&&   = \biggl|\int_{\mathcal{X}} \bigl\{\bar{F}(s+(\beta-\beta_0)x) \bar{G}_{C|X}(s+\beta x|X=x) \\
&& \qquad -\ \bar{F}(s+(\beta'-\beta_0)x) \bar{G}_{C|X}(s+\beta'x|X=x)\bigr\} f_X(x)\ dx \biggr| \\
&&   = \biggl|\int_{\mathcal{X}}\bigl\{-f(s+(\tilde{\beta}-\beta_0)x) \bar{G}_{C|X}(s+\tilde{\beta}x|X=x) \\
&& \qquad -\ \bar{F}(s+(\tilde{\beta}-\beta_0)x) g_{C|X}(s+\tilde{\beta}x|X=x) \bigr\} (\beta-\beta')x f_{X}(x)\, dx \biggr| \\
&&  \leq |\beta-\beta'|\int_{\mathcal{X}}\bigl\{f(s+(\tilde{\beta}-\beta_0)x)+g_{C|X}(s+\tilde{\beta}x|X=x)\bigr\} |x|f_X(x)\, dx \\
&&  \leq (C_1+C_3)|\beta-\beta'|\int_{\mathcal{X}}|x|f_X(x)\, dx \\
&& = K_2|\beta-\beta'|
\end{eqnarray*}
for some constant $K_2=(C_1+C_3)E|X| < \infty$, where $C_1$ and $C_3$ are
defined before.
Moreover, for any $\beta \in \mathcal{B}$ and $-\infty<s,s'<\infty$, by the mean value theorem, there exists a value $\tilde{s}$ between $s$ and $s'$ such that
\begin{eqnarray*}
&& \hspace{-0.2in} |h^{(1)}(\beta,s)-h^{(1)}(\beta,s')| \\
&&  = \biggl|\int_{\mathcal{X}}\bigl\{-f(\tilde{s}+(\beta-\beta_0)x) \bar{G}_{C|X}(\tilde{s}+\beta x|X=x)\\
&& \qquad -\ \bar{F}(\tilde{s}+(\beta-\beta_0)x)g_{C|X}(\tilde{s}+\beta x|X=x)\bigr\}(s-s')f_{X}(x)\ dx \biggr| \\
&&  \leq |s-s'|\int_{\mathcal{X}}\bigl\{f(\tilde{s}+(\beta-\beta_0)x)+g_{C|X}(\tilde{s}+\beta x|X=x)\bigr\} f_X(x)\ dx \\
&&  \leq (C_1+C_3)|s-s'|.
\end{eqnarray*}
Hence, for any $\varepsilon>0$, we have $$\sup_{|\beta-\beta'|+|s-s'|\leq n^{-\varepsilon}}|h^{(1)}(\beta,s)-h^{(1)}(\beta',s')| = O(n^{-\varepsilon}).$$

Finally for $h^{(2)}(\beta,s)$, by using the similar argument to that for $h^{(1)}(\beta,s)$, we can easily obtain that
\begin{eqnarray*}
|h^{(2)}(\beta,s)-h^{(2)}(\beta',s)|
\leq (C_1+C_3)|\beta-\beta'|\int_{\mathcal{X}}x^2f_X(x)\ dx = K_3|\beta-\beta'|
\end{eqnarray*}
and
\begin{eqnarray*}
|h^{(2)}(\beta,s)-h^{(2)}(\beta,s')|
 \leq (C_1+C_3)|s-s'|\int_{\mathcal{X}}|x|f_X(x)\ dx = K_2|s-s'|,
\end{eqnarray*}
where $K_3 = (C_1+C_3)EX^2<\infty$. Therefore, for any $\varepsilon>0$, we have
$$\sup_{|\beta-\beta'|+|s-s'|\leq n^{-\varepsilon}}|h^{(2)}(\beta,s)-h^{(2)}(\beta',s')| = O(n^{-\varepsilon}).$$
Thus, we have proved Lemma \ref{lem:lem2}.
\end{proof}

\begin{lem} \label{lem:lem3}
Let $U_n(\beta,s)$ be random variables for which there exist non-random Borel functions $u_n(\beta,s)$ such that for every $\varepsilon >0$,
\begin{enumerate}
\item[(A1)]
$\displaystyle \sup_{\beta \in \mathcal{B}, -\infty<s<\infty}|U_n(\beta,s)-u_n(\beta,s)|= o(n^{-1/2+\varepsilon})$ almost surely.
\item[(A2)] $U_n(\beta,s)$ has a bounded variation in $s$ uniformly on $\mathcal{B}$, that is,
$$\sup_{\beta \in \mathcal{B}}\int_{s=-\infty}^{\infty}\ |dU_n(\beta,s)|=O(1) \ \mbox{almost surely.}$$
\item[(A3)] $u_n$ satisfies $$\sup_{\beta\in \mathcal{B}, -\infty<s<\infty}|u_n(\beta,s)|= O(1).$$
\end{enumerate}
Then under Conditions 1-3, for every $0 < \varepsilon \leq 1/2$, with probability 1 we have
\begin{eqnarray*} \label{eq:lemma3}
&& \sup_{\beta \in \mathcal{B},
-\infty<y<\infty}\biggl|\int_{s=-\infty}^{y}U_n(\beta,s)\ dH_n^{(0)}(\beta,s)-
\int_{s=-\infty}^{y}u_n(\beta,s)\ dh^{(0)}(\beta,s)\biggr| \\ && \qquad = o(n^{-1/2+\varepsilon}).
\end{eqnarray*}
\end{lem}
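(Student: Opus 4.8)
The plan is to reduce everything to the uniform sup-norm rate for $H_n^{(0)}-h^{(0)}$ supplied by Lemma~\ref{lem:lem1}, by peeling the two approximations $U_n\approx u_n$ and $H_n^{(0)}\approx h^{(0)}$ apart one at a time. Adding and subtracting $\int_{-\infty}^{y}U_n(\beta,s)\,dh^{(0)}(\beta,s)$ gives the decomposition
\begin{equation*}
\int_{-\infty}^{y}U_n\,dH_n^{(0)}-\int_{-\infty}^{y}u_n\,dh^{(0)}
=\int_{-\infty}^{y}U_n\,d\bigl(H_n^{(0)}-h^{(0)}\bigr)
+\int_{-\infty}^{y}\bigl(U_n-u_n\bigr)\,dh^{(0)},
\end{equation*}
and I would bound the two terms separately, uniformly over $\beta\in\mathcal{B}$ and $y\in\mathbb{R}$.

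The second term is the routine one. Since $h^{(0)}(\beta,\cdot)$ is a sub-distribution function it is nondecreasing in $s$, so $\int_{-\infty}^{\infty}|dh^{(0)}(\beta,s)|=h^{(0)}(\beta,\infty)\le 1$ for every $\beta$. Hence, by (A1),
\begin{equation*}
\sup_{\beta,y}\Bigl|\int_{-\infty}^{y}(U_n-u_n)\,dh^{(0)}\Bigr|
\le \sup_{\beta,s}|U_n(\beta,s)-u_n(\beta,s)|=o(n^{-1/2+\varepsilon}).
\end{equation*}

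The first term is where the work lies, and here I would integrate by parts, which is legitimate because $U_n(\beta,\cdot)$ has bounded variation in $s$ by (A2). Writing $D_n=H_n^{(0)}-h^{(0)}$,
\begin{equation*}
\int_{-\infty}^{y}U_n\,dD_n
= U_n(\beta,y)D_n(\beta,y)-\lim_{s\to-\infty}U_n(\beta,s)D_n(\beta,s)
-\int_{-\infty}^{y}D_n\,dU_n.
\end{equation*}
Both $H_n^{(0)}(\beta,s)$ and $h^{(0)}(\beta,s)$ tend to $0$ as $s\to-\infty$, so $D_n(\beta,s)\to 0$ there; combined with the uniform boundedness $\sup_{\beta,s}|U_n|\le\sup_{\beta,s}|u_n|+\sup_{\beta,s}|U_n-u_n|=O(1)$, which follows from (A3) and (A1), the lower boundary term vanishes. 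By Lemma~\ref{lem:lem1} we have $\sup_{\beta,s}|D_n(\beta,s)|=o(n^{-1/2+\varepsilon})$, so the boundary term at $y$ is at most $\sup|U_n|\cdot\sup|D_n|=o(n^{-1/2+\varepsilon})$, while the remaining integral is controlled by
\begin{equation*}
\Bigl|\int_{-\infty}^{y}D_n\,dU_n\Bigr|
\le \sup_{\beta,s}|D_n(\beta,s)|\cdot\sup_{\beta}\int_{-\infty}^{\infty}|dU_n(\beta,s)|
=o(n^{-1/2+\varepsilon})\cdot O(1),
\end{equation*}
again using Lemma~\ref{lem:lem1} together with (A2). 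All three bounds are uniform in $\beta$ and $y$, which yields the claim.

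The main obstacle I anticipate is not any single estimate but ensuring that every bound is uniform over the unbounded range of $s$ and over $\beta\in\mathcal{B}$ simultaneously, and that the boundary term at $s\to-\infty$ genuinely vanishes rather than merely being small. The integration-by-parts step is the crux: it converts the measure $d(H_n^{(0)}-h^{(0)})$, about which we only know a uniform sup-norm rate from Lemma~\ref{lem:lem1}, into the integrand $D_n$, where that rate can be applied directly, at the cost of the variation integral $\int|dU_n|$ that (A2) keeps $O(1)$. The role of (A1) is twofold: it both makes the $U_n-u_n$ term negligible and transfers the boundedness of $u_n$ in (A3) to $U_n$, which is what kills the boundary terms.
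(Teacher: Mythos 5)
Your proposal is correct and matches the paper's own proof: the paper's three-term bound (obtained ``by the triangle inequality and integration by parts'') consists of exactly your terms $\int|U_n-u_n|\,dh^{(0)}$, the boundary term $|U_n(\beta,y)(H_n^{(0)}(\beta,y)-h^{(0)}(\beta,y))|$, and $\int|H_n^{(0)}-h^{(0)}|\,|dU_n|$, each handled via (A1)--(A3) and Lemma \ref{lem:lem1} just as you do. You are in fact slightly more explicit than the paper, since you spell out the integration by parts and verify that the boundary term at $s\to-\infty$ vanishes.
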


\begin{proof}
By the triangle inequality and integration by parts, we have
\begin{eqnarray*}
& & \hspace{-0.3in} \biggl|\int_{s=-\infty}^{y}U_n(\beta,s)\ dH_n^{(0)}(\beta,s)-
\int_{s=-\infty}^{y}u_n(\beta,s)\ dh^{(0)}(\beta,s)\biggr| \\
&&  \leq \int_{s=-\infty}^{y}|U_n(\beta,s)-u_n(\beta,s)|\ dh^{(0)}(\beta,s) \\
&& \qquad + \ |U_n(\beta,y)\bigl(H_n^{(0)}(\beta,y)-h^{(0)}(\beta,y)\bigr)| \\
& & \qquad  + \ \int_{s=-\infty}^{y}|H_n^{(0)}(\beta,s)-h^{(0)}(\beta,s)|\ |dU_n(\beta,s)|.
\end{eqnarray*}
Then it is easy to see that each term on the right hand side of the above inequality is $o(n^{-1/2+\varepsilon})$ almost surely under (A1)-(A3) and Lemma \ref{lem:lem1}.
\end{proof}

\subsection{Proof of Theorem \ref{thm:consis1}}
By the first order Taylor expansion of function $\log(1-x)$, for large $n$ we have
\begin{eqnarray*}
\hat{F}_{n,\beta}(t) &=& 1-\exp\biggl\{\sum_{i:\epsilon_{\beta,i}\leq t}\log \biggl(1-\frac{\Delta_i/n}{H_n^{(1)}(\beta,\epsilon_{\beta,i})}\biggr)\biggr\} \\
&=& 1-\exp\biggl\{-\int_{u\leq t}\ \frac{dH_n^{(0)}(\beta,u)}{H_n^{(1)}(\beta,u)}-\sum_{i:\epsilon_{\beta,i}\leq t}O\bigl(\{nH_n^{(1)}(\beta,\epsilon_{\beta,i})\}^{-2}\bigr)\bigg\}.
\end{eqnarray*}
Then by the mean value theorem and the fact that $e^x \leq 1$ for any $x\leq 0$, it follows that
\begin{eqnarray*}
&& \hspace{-0.2in} |\hat{F}_{n,\beta}(t)-F(\beta,t)| \\
&&\!\!\!= \biggl|\exp\biggl\{-\int_{-\infty}^{t}\ \frac{dh^{(0)}(\beta,u)}{h^{(1)}(\beta,u)}\biggr\} \\
&& \qquad - \ \exp\biggl\{-\int_{-\infty}^{t}\ \frac{dH_n^{(0)}(\beta,u)}{H_n^{(1)}(\beta,u)}- n^{-2}\sum_{i:\epsilon_{\beta,i}\leq t}O\bigl(H_n^{(1)}(\beta,\epsilon_{\beta,i})^{-2}\bigr)\biggr\}\biggr|\\
&&\!\!\! \leq \biggl|\int_{-\infty}^{t}\ \frac{dH_n^{(0)}(\beta,u)}{H_n^{(1)}(\beta,u)}-\int_{-\infty}^{t}\ \frac{dh^{(0)}(\beta,u)}{h^{(1)}(\beta,u)}
+n^{-2}\sum_{i:\epsilon_{\beta,i}\leq t}O\bigl(H_n^{(1)}(\beta,\epsilon_{\beta,i})^{-2}\bigr)\biggr|.
\end{eqnarray*}
Under the condition $H_n^{(1)}(\beta,t)\geq n^{-\varepsilon}$, we have
$$ n^{-2}\sum_{i:\epsilon_{\beta,i}\leq t}O\bigl(H_n^{(1)}(\beta,\epsilon_{\beta,i})^{-2}\bigr)\leq n^{-2} \cdot O(n^{2\varepsilon})\cdot n = O(n^{-1+2\varepsilon})= o(n^{-\frac{1}{2}+3\varepsilon}).$$
So in order to show (\ref{eq:thm1.1}), we only need to show
\begin{eqnarray} \label{eq:thm1_eqn}
&& \sup\biggl\{\biggl|\int_{-\infty}^{t}\ \frac{dH_n^{(0)}(\beta,u)}{H_n^{(1)}(\beta,u)}-\int_{-\infty}^{t}\ \frac{dh^{(0)}(\beta,u)}{h^{(1)}(\beta,u)}\biggr|: \\
&&\hspace{1.5in} \beta \in \mathcal{B}, H_n^{(1)}(\beta,t)\geq n^{-\varepsilon}\biggr\}=o(n^{-\frac{1}{2}+3\varepsilon}) \nonumber
\end{eqnarray}
almost surely.
Now we define $ \tilde{T}_n = \sup\{t: \beta\in \mathcal{B}, H_n^{(1)}(\beta,t) \geq n^{-\varepsilon}\}$,
and let
\begin{equation*}
\tilde{H}_n^{(1)}(\beta,t) = \begin{cases}
                H_n^{(1)}(\beta,t), & \mbox{if $t \leq \tilde{T}_n$,} \\
                H_n^{(1)}(\beta,\tilde{T}_n), & \mbox{if $t > \tilde{T}_n$.}
            \end{cases}
\end{equation*}
Then $\tilde{H}_n^{(1)}(\beta,t)\geq n^{-\varepsilon}$ for all $\beta\in \mathcal{B}$ and $-\infty<t<\infty$. Define $\tilde{h}^{(1)}(\beta,t)$ similarly as $\tilde{H}_n^{(1)}(\beta,t)$ and apply Lemma \ref{lem:lem3} to $U_n(\beta,u) = n^{-2\varepsilon}\{\tilde{H}_n^{(1)}(\beta,u)\}^{-1}$ and $u_n(\beta,u) = n^{-2\varepsilon}\{\tilde{h}^{(1)}(\beta,u)\}^{-1}$, we obtain (\ref{eq:thm1_eqn}) and thus (\ref{eq:thm1.1}) holds.

We now show (\ref{eq:thm1.2}). Notice that $F(t)=F(\beta_0,t)$, then under the restriction $\{|\beta-\beta_0| \leq n^{-3\varepsilon},h^{(1)}(\beta,t)\geq n^{-\varepsilon}\}$, by the mean value theorem we obtain
\begin{eqnarray*}
&& \hspace{-0.2in} |F(\beta,t)-F(t)| \\
&& = \biggl|\exp\biggl\{-\int_{u\leq t}\ \frac{dh^{(0)}(\beta,u)}{h^{(1)}(\beta,u)}\biggr\} - \exp\biggl\{-\int_{u\leq t}\ \frac{dh^{(0)}(\beta_0,u)}{h^{(1)}(\beta_0,u)}\biggr\} \biggr| \\
&&  \leq \biggl|\int_{u\leq t}\ \frac{dh^{(0)}(\beta,u)}{h^{(1)}(\beta,u)} - \int_{u\leq t}\ \frac{dh^{(0)}(\beta_0,u)}{h^{(1)}(\beta_0,u)} \biggr| \\
&&  \leq \biggl|\int_{u\leq t}\ \frac{d\{h^{(0)}(\beta,u)-h^{(0)}(\beta_0,u)\}}{h^{(1)}(\beta,u)}\biggr| \\
&& \qquad +\ \biggl|\int_{u\leq t}\biggl(\frac{h^{(1)}(\beta_0,u)-h^{(1)}(\beta,u)}{h^{(1)}(\beta,u)h^{(1)}(\beta_0,u)}\biggr)\ dh^{(0)}(\beta_0,u)\biggr| \\
&&  \leq n^{\varepsilon}\sup\{|h^{(0)}(\beta_0,t)-h^{(0)}(\beta,t)|\} \\
 && \qquad + \ n^{2\varepsilon}h^{(0)}(\beta_0,t) \sup\{|h^{(1)}(\beta_0,t)-h^{(1)}(\beta,t)|\} \\
&&
= O(n^{-\varepsilon}),
\end{eqnarray*}
where the third inequality holds because for any $u\leq t$, $\{h^{(1)}(\beta,u)\}^{-1} \leq \{h^{(1)}(\beta,t)\}^{-1} \leq n^{\varepsilon}$, and the last equality holds because $h^{(0)}(\beta_0,t)\leq 1$ and $\sup\{|h^{(k)}(\beta,t)-h^{(k)}(\beta_0,t)|\} = O(|\beta-\beta_0|)$, $k=0,1$, by Lemma \ref{lem:lem2}. Thus (\ref{eq:thm1.2}) holds. Finally, (\ref{eq:thm1.3}) can be easily obtained by applying the triangle inequality to (\ref{eq:thm1.1}) and (\ref{eq:thm1.2}) together with Lemma \ref{lem:lem1} provided that $ -\frac{1}{2}+3\varepsilon \leq -\varepsilon $, i.e., $0<\varepsilon \leq \frac{1}{8}$.
\hfill $\Box$

\subsection{Proof of Theorem \ref{thm:consis2}}
Notice that
\begin{eqnarray*}
\alpha_0 = \int_{-\infty}^{\infty}t\, d F(t) = \int_{0}^{\infty}\{1-F(t)\}\ dt - \int_{-\infty}^{0}F(t)\, dt.
\end{eqnarray*}
We thus have
\begin{eqnarray} \label{eq:diff_alpha}
\int_{-\infty}^{\infty}t\, d\hat{F}_{n,\beta}(t)-\alpha_0 &=& \int_{-\infty}^{\infty}t\, d\hat{F}_{n,\beta}(t)-\int_{-\infty}^{\infty}t\, d F(t) \nonumber \\
&=& \biggl\{\int_{0}^{\infty}\{1-\hat{F}_{n,\beta}(t)\}\, dt - \int_{0}^{\infty}\{1-F(t)\}\, dt \biggr\} \\
&& \qquad - \ \biggl\{\int_{-\infty}^{0}\hat{F}_{n,\beta}(t)\, dt - \int_{-\infty}^{0}F(t)\, dt \biggr\}. \nonumber
\end{eqnarray}
With $\beta_0 \ne 0$, when $\beta$ satisfies $|\beta-\beta_0|\leq n^{-3\varepsilon}$, we have $\beta \ne 0$ for sufficiently large $n$. For any $\beta \ne 0$ and $t \in (-\infty, \infty)$, one can always find a range of $x$ such that $\bar{G}_{C|X}(t+\beta x|X=x)>0$ and $\bar{F}(t+(\beta-\beta_0)x)>0$ since $\bar{F}(t)>0$ for all $t<\infty$ under the assumption $f_X(x)>0$ for all $-\infty<x<\infty$ and $\beta_0 \ne 0$.
Therefore, we have $ h^{(1)}(\beta,t)> 0$ for all $t \in (-\infty, \infty)$ from the following equation that is obtained in the proof of Lemma \ref{lem:lem2}:
\begin{equation*}
h^{(1)}(\beta,t) = \int_{-\infty}^{\infty} \bar{F}(t+(\beta-\beta_0)x)\bar{G}_{C|X}(t+\beta x|X=x)f_X(x)\, dx.
\end{equation*}
Moreover, since $H_n^{(1)}(\beta,t) \rightarrow h^{(1)}(\beta,t)$ almost surely as $n \rightarrow \infty$, then with $n$ sufficiently large, we have $H_n^{(1)}(\beta,t)>0$ almost surely for any $\beta \ne 0$ and $t \in (-\infty, \infty)$. Hence $T_n \rightarrow \infty$ almost surely as $n \rightarrow \infty$, where $T_n = \sup\big\{t: H_n^{(1)}(\beta,t) \geq n^{-\varepsilon}, |\beta-\beta_0|\leq n^{-3\varepsilon} \big\}$, as defined in (\ref{eq:Tn}).

Then at $\beta=\beta_0$, by the independence of $e_0$ and $C-\beta_0 X$ and the Markov's inequality, it follows that
\begin{eqnarray*}
h^{(1)}(\beta_0,T_n) &=& P\{1(e_0 \geq T_n)\}\cdot P\{1(C-\beta_0 X \geq T_n)\} \\
&\leq& P\{1(e_0 \geq T_n)\} \leq \frac{Ee_0^2}{T_n^2}.
\end{eqnarray*}
Since $H_n^{(1)}(\beta_0,T_n) \geq n^{-\varepsilon}$ implies $h^{(1)}(\beta_0, T_n) \geq n^{-\varepsilon}$, together with Condition 4 that $Ee_0^2<\infty$, we have $T_n^2 \leq Ee_0^2\{h^{(1)}(\beta_0,T_n)\}^{-1} \leq O(n^{\varepsilon})$, i.e., $T_n \leq O(n^{\varepsilon/2})$. This implies that $T_n \rightarrow \infty$ in a rate no faster than $n^{\varepsilon/2}$.

Since the Kaplan-Meier estimator $\hat{F}_{n,\beta}(t)$ is set to 1 for $t>T_n$, equation (\ref{eq:diff_alpha}) becomes
\begin{eqnarray*}
\int_{-\infty}^{\infty}t\ d\hat{F}_{n,\beta}(t)-\alpha_0 &=&
\int_{0}^{T_n}\{F(t)-\hat{F}_{n,\beta}(t)\}\, dt - \int_{T_n}^{\infty}\{1-F(t)\}\, dt \\
&& \qquad - \ \int_{-\infty}^{0}\{\hat{F}_{n,\beta}(t)-F(t)\}\, dt.
\end{eqnarray*}
Then by Theorem \ref{thm:consis1}, we have
$$
\sup \biggl\{\int_{0}^{T_n}|F(t)-\hat{F}_{n,\beta}(t)|\, dt: |\beta-\beta_0|\leq n^{-3\varepsilon} \biggr\} \leq T_n \cdot O(n^{-\varepsilon}) \leq O(n^{-\frac{\varepsilon}{2}})
$$
almost surely. For the second term on the right hand side of above equation, applying the Markov's inequality we obtain
$$
\int_{T_n}^{\infty}\{1-F(t)\}\, dt \leq \int_{T_n}^{\infty} P\{1(|e_0|\geq t)\}\, dt \leq \int_{T_n}^{\infty} \frac{Ee_0^2}{t^2}\, dt \leq \frac{Ee_0^2}{T_n} = o(1)
$$
almost surely. For the third term, we have
\begin{eqnarray*}
\int_{-\infty}^{0}\{\hat{F}_{n,\beta}(t)-F(t)\}\, dt &=& \int_{-T_n}^{0}\{\hat{F}_{n,\beta}(t)-F(t)\}\, dt \\
&& \qquad + \int_{-\infty}^{-T_n}\{\hat{F}_{n,\beta}(t)-F(t)\}\, dt,
\end{eqnarray*}
where
$$
\sup \biggl\{\int_{-T_n}^{0}|F(t)-\hat{F}_{n,\beta}(t)|\, dt: |\beta-\beta_0|\leq n^{-3\varepsilon} \biggr\} \leq T_n \cdot O(n^{-\varepsilon}) \leq O(n^{-\frac{\varepsilon}{2}})
$$
almost surely, and
\begin{eqnarray*}
\int_{-\infty}^{-T_n}|F(t)-\hat{F}_{n,\beta}(t)|\, dt &\leq& \int_{-\infty}^{-T_n} F(t)\, dt + \int_{-\infty}^{-T_n} \hat{F}_{n,\beta}(t)\, dt \\
& =& \int_{T_n}^{\infty}F(-t)\, dt + \int_{-\infty}^{-T_n} \hat{F}_{n,\beta}(t)\, dt \\
&\leq & \frac{Ee_0^2}{T_n} + o(1) = o(1)
\end{eqnarray*}
almost surely, where the last inequality holds because of the Markov's inequality
$$F(-t)=P\{1(e_0 \leq -t)\}\leq P\{1(|e_0|\geq t)\} \leq \frac{Ee_0^2}{t^2}$$
and the fact $\int_{-\infty}^{-T_n} \hat{F}_{n,\beta}(t)\ dt \rightarrow 0$ almost surely as $n \rightarrow \infty$. Therefore,
\begin{equation*}
\sup\biggl\{\biggl|\int_{-\infty}^{\infty}t \, d\hat{F}_{n,\beta}(t)-\alpha_0 \biggr|: |\beta-\beta_0| \leq n^{-3\varepsilon} \biggr\}=o(1)
\end{equation*}
almost surely. We now have proved Theorem \ref{thm:consis2}.
\hfill $\Box$

\bibliographystyle{sjs}
\bibliography{biblio_YD}



\end{document}